\newenvironment{proof}[1][Proof]%
  {\smallskip\par\noindent\textbf{#1\,:\ }}%
  {\hspace*{\fill} \rule{6pt}{6pt}\smallskip}
\newenvironment{proof*}[1][Proof]%
  {\medskip\par\noindent\textbf{#1\,:\ }}%
\newtheorem{remark}{\textbf{Remark}}
\newtheorem{assumption}{Assumption}
\newtheorem{theorem}{Theorem}
\definecolor{gray}{RGB}{128,128,128}
\newcommand{\red}[1]{{\color{black} #1}}
\title{\red{Distributed Robust Dynamic Average Consensus with \\ Dynamic Event-Triggered Communication}}
\author{Jemin George, Xinlei Yi and Tao Yang% <-this % stops a space
%\thanks{*This work was not supported by any organization}% <-this % stops a space
%\thanks{,%        {\tt\small x}}%
\thanks{Jemin~George is with the U.S. Army Research Laboratory, Adelphi, MD 20783, USA.
{\tt\small jemin.george.civ@mail.mil}}%
\thanks{Xinlei~Yi is with the ACCESS Linnaeus Center, School of Electrical Engineering and Computer Science, Royal Institute of Technology, 100 44 Stockholm, Sweden.
{\tt\small xinleiy@kth.se}}%
\thanks{Tao~Yang with the Department of Electrical Engineering, University of North Texas, Denton, TX 76203 USA.
\red{{\tt\small Tao.Yang@unt.edu}}}%
%\thanks{R.~A.~Freeman and K.~M.~Lynch are also with the Northwestern Institute on Complex Systems (NICO), Northwestern University, Evanston, IL 60201.}%
}
\begin{document}

%\includepdf[pages=1-last]{Main_CDC2018_v3.pdf}

\maketitle
\thispagestyle{empty}
\pagestyle{empty}

%%%%%%%%%%%%%%%%%%%%%%%%%%%%%%%%%%%%%%%%%%%%%%%%%%%%%%%%%%%%%%%%%%%%%%%%%%%%%%%%
\begin{abstract}
This paper presents the formulation and analysis of a fully distributed dynamic event-triggered communication based robust dynamic average consensus algorithm. Dynamic average consensus problem involves a networked set of agents estimating the time-varying average of dynamic reference signals locally available to individual agents. We propose an asymptotically stable solution to the dynamic average consensus problem that is robust to network disruptions. Since this robust algorithm requires continuous communication among agents, we introduce a novel dynamic event-triggered communication scheme to reduce the overall inter-agent communications. It is shown that the event-triggered algorithm is asymptotically stable and free of Zeno behavior. Numerical simulations are provided to illustrate the effectiveness of the proposed algorithm.
\end{abstract}
%%%%%%%%%%%%%%%%%%%%%%%%%%%%%%%%%%%%%%%%%%%%%%%%%%%%%%%%%%%%%%%%%%%%%%%%%%%%%%%%

\section{Introduction}\label{sec:intro}

Consider a set of \red{$n$} networked agents, each with its own reference signal $\phi_i(t) \red{\in} \mathbb{R}^r$. The dynamic average consensus problem involves designing distributed algorithms that would allow the agents to locally estimate the time-varying average $\bar{\phi}(t) \triangleq  \frac{1}{n} \sum\limits_{i=1}^{n} \phi_i(t)$. This estimator design problem has numerous applications in multi-agent systems~\cite{George,George2018ICASSP}. For example, in the distributed optimization problem $ \min\limits_\red{x \in \mathbb{R}^r}\sum\limits_{i=1}^{n} f_i(x)$, $\phi_i(t) = \nabla f_i(\cdot)$, in distributed estimation problem, $\phi_i(t)$s are local weighted measurement-residuals, and in multi-agent coordination problems such as containment control, $\phi_i(t)$s are the leader trajectories. Thus dynamic average consensus is at the heart of numerous network applications\red{,} such as distributed learning, distributed sensor fusion, formation control, distributed optimization, and distributed mapping.

The main difficulty in designing distributed solutions to dynamic average consensus problem is the lack of access to any error signals. To be \red{more} specific, if $x_i(t)$ is the $i^{\text{th}}$-node's estimate of $\bar{\phi}(t)$, then none of the nodes have access to the average-consensus error $\tilde{x}_i\red{(t)}= x_i(t)-\bar{\phi}(t)$, thus rendering the traditional feedback-control techniques obsolete. Therefore solutions to dynamic average consensus problem was first proposed for reference signals with steady-state values~\cite{Spanos2005} or slowly varying reference signals \cite{Freeman:CDC2006}. Assuming access to the dynamics that generate the reference signal, an internal model based dynamic average consensus algorithms are presented in \cite{Bai10,Bai2016}. Assuming access to the time derivatives of the reference signals, a dynamic average consensus algorithm built on singular perturbation theory is given in \cite{Kia2013}. However in real world applications, it is not reasonable to assume knowledge of the reference signal dynamics or presume access to its time derivatives.

Even though the agents don't have direct access to the error signal $\tilde{x}_i(t)$, they can calculate the local difference or disagreement in the error, i.e., $\tilde{x}_i(t) - \tilde{x}_j(t)$. Thus the error signal is such that it sum to zero, then the dynamic average consensus problem is solved if the agents reach consensus on $\tilde{x}_i$. In other words, if $\sum\limits_{i=1}^{n} \tilde{x}_i(t) = 0$ and $\tilde{x}_i(t) = \tilde{x}_j(t)$ for all $(i,j)$ pairs, then $\tilde{x}_i(t)=0$ for all $i$. Therefore there exists several solutions to dynamic average consensus problems where an estimator is designed such that the estimator structure along with an initialization requirement provides the zero-sum condition $\tilde{x}_i(t)=0$ while the inputs to the estimator are selected such that the agents reach consensus on the error signal. Examples of such algorithms include the nonlinear dynamic average consensus estimators for reference signals with bounded derivative given in \cite{Nosrati2012}, \cite{Kia2014}, and \cite{Kia2015a}. The algorithms in \cite{Nosrati2012}, \cite{Kia2014}, and \cite{Kia2015a} are shown to yield bounded average-consensus error even for a directed network, but the error bounds are proportional to the upper bound on the time derivatives of the reference signals.  Besides the continuous-time algorithms discussed so far, there also exist several discrete-time dynamic average consensus algorithms \cite{Zhu2010, Yuan2012, Montijano2014b, Scoy2015, VanScoy2015CDC, Franceschelli2016}.

While the dynamic average consensus problem focus on designing estimators, a combined estimator and controller design problem to estimate and track the time-varying average signal is studied under the name \emph{distributed average tracking} (distributed average tracking)~\cite{WeiRen2011ACC,WeiRen2012TAC, WeiRen2013CCC, WeiRen2014ACC, WeiRen2015ACC, WeiRen2015TAC2, WeiRen2015TAC3, WeiRen2016ACC, Zhao2017, WeiRen2017ACC}. The problem formulation in distributed average tracking consists of assuming a particular dynamic-model for individual agents and then designing a distributed control law that allows the agents to track the time-varying average. The main drawback to considering a combined estimator/controller solution is that it is tailored to specific node dynamics and therefore only valid for the assumed dynamic-model. As a result, there exist numerous distributed average tracking solutions to the same average-consensus problem involving agents with single-integrator dynamics~\cite{WeiRen2011ACC,WeiRen2012TAC}, double-integrator dynamics~\cite{WeiRen2015ACC,WeiRen2015TAC3}, \textsc{E}uler-\textsc{L}agrange dynamics~\cite{WeiRen2014ACC, WeiRen2015TAC2}, known linear dynamics~\cite{WeiRen2013CCC}, nonlinear dynamics~\cite{WeiRen2016ACC}, heterogeneous dynamics~\cite{WeiRen2017ACC}, and so on. Furthermore, the combined estimator/controller solution limits the utility of such algorithms for numerous network applications such as distributed optimization. Besides, if the agents are able to estimate the time-varying average, say using a DAC estimator, then the control design problem is often trivial.

Dynamic average consensus algorithms in \cite{Nosrati2012}, \cite{Kia2014}, \cite{Kia2015a}, and \cite{Kia2015b} all require a specific initialization of its variables to satisfy the condition $\sum\limits_{i=1}^{n} \tilde{x}_i(t_0) = 0$. This requirement seems benign at first because it can be easily satisfied by selecting $x_i(t_0) = \phi_i(t_0)$ for all $i$. However, when an agent leaves the network or when the network split into several small subgraphs, the condition $\sum\limits_{i=1}^{n} \tilde{x}_i(t_0) = 0$ is violated. This result in a nonzero steady-state error unless all the nodes reinitialize the algorithm after every such network disruption. This algorithm sensitivity to initialization, typically referred to as the lack of robustness to \emph{initialization errors}, is an issue in most of the distributed average tracking approaches \cite{WeiRen2011ACC,WeiRen2012TAC,WeiRen2015ACC,WeiRen2015TAC3}. Currently, no systematic solution to this problem that does not sacrifice algorithm performance or introduce stringent assumptions on the reference signal or its dynamics exists.

The continuous-time solutions given in \cite{George2017ACC,Nosrati2012,Kia2015a,WeiRen2011ACC,WeiRen2012TAC, WeiRen2013CCC, WeiRen2014ACC, WeiRen2015ACC, WeiRen2015TAC2, WeiRen2015TAC3, WeiRen2016ACC, Zhao2017, WeiRen2017ACC} all assume continuous communication among agents. This is not a reasonable assumption especially if the agents are interacting via wireless communication network. Even though discrete time algorithms are more docile to implementation, none of the discrete time algorithms can guarantee zero steady-state error for the types of reference signals considered here. Furthermore, the use of a fixed communication step-size in discrete time algorithms can be a wasteful use of the network resources. Distributed event-triggered communication provides a way to address some of these challenges by locally designing inter-agent communication times in an opportunistic manner. Thus, instead of communicating continuously or periodically, the designed communication times or event times, allows the agents to determine when to communicate based on a specific triggering mechanism. Thus far references \cite{Kia2014} and \cite{Kia2015b} constitutes the only two literature on distributed event-triggered dynamic average mechanism.

%The continuous-time solutions given in \cite{Nosrati2012,Kia2015a,WeiRen2011ACC,WeiRen2012TAC, WeiRen2013CCC, WeiRen2014ACC, WeiRen2015ACC, WeiRen2015TAC2, WeiRen2015TAC3, WeiRen2016ACC, Zhao2017, WeiRen2017ACC} all assume continuous communication among agents. This is not a reasonable assumption especially if the agents are interacting via wireless communication network. Even though discrete time algorithms are more docile to implementation, none of the discrete time algorithms can guarantee zero steady-state error for the types of reference signals considered here. Furthermore, the use of a fixed communication step-size in discrete time algorithms can be a wasteful use of the network resources. Discrete-time algorithms use the same step size for communication and computation, resulting in potentially a conservative step size for communication times. This can result in costly implementations since communication is much more power demanding compared to computation. Distributed event-triggered communication provides a way to address some of these challenges by locally designing inter-agent communication times in an opportunistic manner. Thus instead of communicating continuously or periodically, the designed communication times or event times, allows the agents to determine when to communicate based on a specific triggering mechanism. Thus far references \cite{Kia2014} and \cite{Kia2015b} constitutes the only two literature on distributed event-triggered dynamic average mechanism.

Here we first present a dynamic average consensus algorithm that is robust to initialization errors (section \ref{sec:RobustDAC}). The robust algorithm given in section \ref{sec:RobustDAC} makes use of an adaptive gain which removes the explicit use of any upper bounds on reference signals or its time-derivative in the algorithm. We then present a distributed event triggered version of the algorithm in section \ref{sec:ETRobustDAC} which make use of a dynamic triggering mechanism. The event-triggered algorithm is shown to provide asymptotic convergence as well as free of Zeno\footnote{For continuous-time multi-agent systems, Zeno behavior means that there are infinite number of event triggers in a finite time interval~\cite{Johansson99}.} behavior. Compared to existing results, the proposed algorithm is novel in the following sense:
\begin{itemize}
  \item The proposed event-triggered algorithm is robust to network disruptions since it does not require any specific initialization criteria (see above discussion for details).
  \item The proposed algorithm can theoretically guarantee zero steady state error.
  \item The proposed triggering laws involve internal dynamic variables which play an essential role in guaranteeing that the triggering time sequence does not exhibit Zeno behavior.
\end{itemize}

\section{Preliminaries}\label{sec:pre}

\subsection*{Notation}

Let $\mathbb{R}^{n\times m}$ denote the set of $n\times m$ real matrices. An $n\times n$ identity matrix is denoted as $I_n$ and $\mathbf{1}_n$ denotes an $n$-dimensional vector of all ones. Let $\mathbb{R}^n_{\mathbf{1}}$ denote the set of all $n$-dimensional vectors of the form $\kappa\mathbf{1}_n$, where $\kappa\in\mathbb{R}$. For two vectors $\mathbf{x} \in \mathbb{R}^n$ and $\mathbf{y} \in \mathbb{R}^n$, $\mathbf{x} \geq  \mathbf{y} \,\,\left(\mathbf{x} \leq \mathbf{y} \right)$ implies $x_i \geq  y_i, \,\, \left({x}_i \leq {y}_i \right)$, $\forall\,i\in\{1,\ldots,n\}$. The absolute value of a vector is given as $|\mathbf{x}| = \begin{bmatrix} |x_1| & \ldots & |x_n|\end{bmatrix}^T$. Let $\text{sgn}\{\cdot\}$ denote the signum function, defined as
$$\text{sgn}\{x\} \triangleq \left\{
\begin{array}{ll}
+1, & \hbox{if $x>0$;} \\
0, & \hbox{if $x=0$;}  \\
 -1, & \hbox{if $x<0$,}
\end{array}
\right.
$$
and $\forall\,\mathbf{x}\in\mathbb{R}^n$, $\text{sgn}\{\mathbf{x}\} \triangleq \begin{bmatrix} \text{sgn}\{x_1\} & \ldots & \text{sgn}\{x_n\}\end{bmatrix}^T$. For $p\in[1,\,\infty]$, the $p$-norm of a vector $\mathbf{x}$ is denoted as $\left\| \mathbf{x} \right\|_p$. For matrices $A \in \mathbb{R}^{m\times n}$ and $B \in \mathbb{R}^{p \times q}$, $A \otimes B \in \mathbb{R}^{mp \times nq}$ denotes their Kronecker product.

\subsection*{Network Model}

For a \emph{connected undirected} graph $\mathcal{G}\left(\mathcal{V},\mathcal{E}\right)$ of order $n$, $\mathcal{V} \triangleq \left\{v_1, \ldots, v_n\right\}$ represents the agents or nodes. The communication links between the agents are represented as $\mathcal{E} \triangleq \left\{e_1, \ldots, e_{\ell}\right\} \subseteq \mathcal{V} \times \mathcal{V}$. Here each undirected edge is considered as two distinct directed edges and the edges are labeled such that they are grouped into incoming links to nodes $v_1$ to $v_n$. Let $\mathcal{I}$ denote the index set $\{1,\ldots,n\}$ and $\forall i \in \mathcal{I}$; let $\mathcal{N}_i \triangleq \left\{v_j \in \mathcal{V}~:~(v_i,v_j)\in\mathcal{E}\right\}$ denote the set of neighbors of node $v_i$. Let $A \triangleq \left[a_{ij}\right]\in \{0,1\}^{n\times n}$ be the \emph{adjacency matrix} with entries $a_{ij} = 1 $ if $(v_i,v_j)\in\mathcal{E}$ and zero otherwise. Define $\Delta \triangleq \text{diag}\left(A\mathbf{1}_n\right)$ as the degree matrix associated with the graph and $L \triangleq \Delta - A$ as the graph \emph{Laplacian}. The \emph{incidence matrix} of the graph is defined as $B = \left[b_{ij}\right]\in \left\{-1,0,1\right\}^{n\times \ell}$, where $b_{ij} = -1$ if edge $e_j$ leaves node $v_i$, $b_{ij} = 1$ if edge $e_j$ enters node $v_i$, and $b_{ij} = 0$ otherwise.

For the connected undirected graph $\mathcal{G}\left(\mathcal{V},\mathcal{E}\right)$, $L$ is a positive semi-definite matrix with one eigenvalue at 0 corresponding to the eigenvector $\mathbf{1}_n$. Since each undirected edge is considered as two distinct directed edges, we have $L = \frac{1}{2}BB^T$. Furthermore, we have $M \triangleq \left(I_n - \displaystyle\frac{1}{n}\mathbf{1}_n\mathbf{1}^T_n\right) = L \left( L \right)^+ = {BB}^T\left({BB}^T\right)^+ =$  $B\left(B^TB\right)^+B^T$, where $\left(\cdot\right)^+$ denotes the generalized inverse~(Lemma~3~\cite{Gutman04}).

\begin{remark}
For all $\mathbf{x}\in\mathbb{R}^n$, such that $\mathbf{1}^T_n\mathbf{x} = 0$, we have $\mathbf{x}^TL\left( L \right)^+\mathbf{x} =  \mathbf{x}^T\mathbf{x} $.
\end{remark}

\section{Problem Formulation}\label{sec:Problem}

Let ${\phi}_i(t) \in \mathbb{R}^r$ denote the $i^{\text{}th}$-node's ($v_i$'s) reference signal at time $t$. The dynamic average consensus problem involves each agent estimating the time-varying signal
\begin{align}
\bar{{\phi}}(t) = \frac{1}{n} \sum_{i=1}^{n}\,{\phi}_i(t) = \red{\frac{1}{n} \left(\mathbf{1}^T_n \otimes I_r\right) \bm{\phi}(t),}
\end{align}
where \red{$n$ is the number of agents, $r$ is the size of reference signals,} and $\bm{\phi}(t) \in$ $\mathbb{R}^{n r} \triangleq \begin{bmatrix} {\phi}^T_1(t) & \ldots & {\phi}^T_n(t)\end{bmatrix}^T$. Let $\dot{\bm{\phi}}(t) \triangleq \begin{bmatrix} \dot{{\phi}}^T_1(t) & \ldots & \dot{{\phi}}^T_n(t)\end{bmatrix}^T$. Now we make following standing assumptions:

% ---------------------- Assumption ---------------------- Assumption ---------------------- Assumption ---------------------- Assumption ----------------------
\begin{assumption}\label{Assump:Graph}
The interaction topology of $n$ networked agents is given as a connected undirected graph $\mathcal{G}\left(\mathcal{V},\mathcal{E}\right)$.
\end{assumption}
% ---------------------- Assumption ---------------------- Assumption ---------------------- Assumption ---------------------- Assumption ----------------------

% ---------------------- Assumption ---------------------- Assumption ---------------------- Assumption ---------------------- Assumption ----------------------
\begin{assumption}\label{Assump:Phi}
For any two connected agents, the local difference in signals ${\phi}_i(t)$ and their derivatives $\dot{{\phi}}_i(t)$ are bounded such that there exist bounds $\varphi$ and $\dot{\varphi}$ that satisfy
\begin{align}
  &\sup_{\substack{t\in[0,\infty)\\\forall\,i,j:(v_i,v_j)\in\mathcal{E}}} \| \phi_i(t) - \phi_j(t)\|_{\infty} \leq \varphi < \infty, \qquad \text{and} \label{Eq:Phi:Assump}\\
  &\sup_{\substack{t\in[0,\infty)\\\forall\,i,j:(v_i,v_j)\in\mathcal{E}}} \| \dot{\phi}_i(t) - \dot{\phi}_j(t)\|_{\infty} \leq \dot{\varphi} < \infty. \label{Eq:DotPhi:Assump}
\end{align}
\end{assumption}
% ---------------------- Assumption ---------------------- Assumption ---------------------- Assumption ---------------------- Assumption ----------------------
\vspace{0.25cm}
Note that Assumption \ref{Assump:Phi} is less \red{strict} than assuming absolute bounds on signals ${\phi}_i(t)$ and their derivatives $\dot{{\phi}}_i(t)$. Using vector notation \eqref{Eq:Phi:Assump} and \eqref{Eq:DotPhi:Assump} can be written as
\begin{align}
  &\sup_{\substack{t\in[0,\infty)}} \left\| \left( B^T \otimes I_r \right) \bm{\phi}(t) \right\|_{\infty} \leq \varphi , \qquad \text{and} \label{Eq:Phi:Assump1}\\
  &\sup_{\substack{t\in[0,\infty)}} \left\| \left( B^T \otimes I_r \right) \dot{\bm{\phi}}(t) \right\|_{\infty} \leq \dot{\varphi} . \label{Eq:DotPhi:Assump1}
\end{align}

\section{Robust Dynamic Average Consensus Algorithm}\label{sec:RobustDAC}

\subsection{Robust Algorithm}

Let $x_i(t) \in \mathbb{R}^{r}$ denote node $v_i$'s estimate of $\bar{\phi}(t)$. Here we propose the following robust dynamic average consensus algorithm:
\begin{subequations}\label{RDATsys1}
\begin{align}
  \dot{\mathbf{z}}(t) &=  -\gamma\,\mathbf{z}(t) + \mathbf{u}(t),\quad \mathbf{z}(t_0) = \mathbf{z}_0,\label{RDATsys1a}\\
  \mathbf{x}(t) &= \mathbf{z}(t) + \bm{\phi}(t)\label{RDATsys1b},
\end{align}
\end{subequations}
where $\gamma>0$ is a positive constant, $\mathbf{x}(t)$ $\in$ $\mathbb{R}^{nr}$ $\triangleq$ $\begin{bmatrix} x^T_1(t) & \ldots & x^T_n(t)\end{bmatrix}^T$ \red{is} the estimate of $\bar{\phi}(t)$ for the entire network, $\mathbf{z}(t)$ $\in$ $\mathbb{R}^{n r}$ $\triangleq$ $\begin{bmatrix} z^T_1(t) & \ldots & z^T_n(t)\end{bmatrix}^T$ is the internal state of the estimator for the entire network, and $\mathbf{u}(t)$ is the input that needs to be designed.

Let $\tilde{\mathbf{x}}(t)\triangleq\mathbf{x}(t)-\mathbf{1}_n \otimes \bar{\phi}(t)$ denote the dynamic average consensus error. Now the error dynamics can be written as
\begin{align*}
  \dot{\tilde{\mathbf{x}}}(t) &= \dot{\mathbf{z}}(t) + \dot{\bm{\phi}}(t) - \frac{1}{n} \left( \mathbf{1}_n\mathbf{1}^T_n \otimes I_r\right) \dot{\bm{\phi}}(t),\\
  &= -\gamma\mathbf{z}(t) + \mathbf{u}(t) + \left( M \otimes I_r\right) \dot{\bm{\phi}}(t).
\end{align*}
Now adding and subtracting $\gamma \left(M\otimes I_r\right) {\bm{\phi}} (t)$ yields
\begin{align}\label{DACError}
  \dot{\tilde{\mathbf{x}}}(t) & =  -\gamma \tilde{\mathbf{x}}(t) + \mathbf{u}(t) + \left(M\otimes I_r\right) \left( \dot{\bm{\phi}} (t) +\gamma  {\bm{\phi}} (t)\right).
\end{align}

\red{\subsection{Convergence Result}}
The following theorem illustrates how to select the inputs $\mathbf{u}(t)$ such that the average consensus-error asymptotically converges to zero.

% ---------------------- Theorem ---------------------- Theorem ---------------------- Theorem ---------------------- Theorem ----------------------
\begin{theorem}\label{Theorem01}
Given Assumptions~\ref{Assump:Graph} and~\ref{Assump:Phi}, the robust dynamic average consensus algorithm in \eqref{RDATsys1} guarantees that the average consensus error, $\tilde{\mathbf{x}}(t)$, asymptotically decays to zero for any initial condition $\mathbf{z}_0$, if the estimator input $\mathbf{u}(t)$ is selected as
\begin{equation}\label{Eqn:u1}
\mathbf{u}(t) = - \left(B \otimes I_r\right)\,K(t) \, \text{sgn}\left\{\left(B^T \otimes I_r\right){\mathbf{x}}(t)\right\},
\end{equation}
where $K(t) \in \mathbb{R}^{r\ell\times r\ell}$ is a diagonal gain matrix with diagonal entries $\kappa_j(t)$, $j$ $\in$ $\{1,\ldots,r\ell\}$ updated according to
\begin{equation}\label{Eqn:kappa1}
\dot{\kappa}_j(t) = |y_j(t)|, \quad \kappa_j(t_0) \geq 1, %\quad \forall j\in\{1,\ldots,r\ell\},
\end{equation}
%where $\kappa_j(t)$ is the $j^{th}$ diagonal element of the matrix $K(t)$
and $\mathbf{y}(t) \in \mathbb{R}^{r\ell} \triangleq  \begin{bmatrix} y_1(t) & \ldots & y_{r\ell}(t)\end{bmatrix}^T$ is defined as $\mathbf{y}(t) = \left( B^T \otimes I_r \right)\mathbf{x}(t)$.
\end{theorem}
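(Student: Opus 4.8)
The plan is a Lyapunov argument carried out on the error dynamics \eqref{DACError}, augmented with the adaptive-gain states $\kappa_j$. Since the input \eqref{Eqn:u1} contains the discontinuous $\text{sgn}\{\cdot\}$ map, the closed loop is a differential inclusion, so I would interpret solutions in the Filippov sense and work with a locally Lipschitz, regular Lyapunov function whose set-valued derivative I can bound. The candidate I would use is $V(t) = \tfrac{1}{2}\tilde{\mathbf{x}}^T(t)\tilde{\mathbf{x}}(t) + \tfrac{1}{2}\sum_{j=1}^{r\ell}\big(\kappa_j(t)-\kappa^{*}\big)^2$, where $\kappa^{*}$ is a constant threshold fixed later in the analysis; note that the second term is what lets the \emph{growing} gains be absorbed, so that boundedness can still be concluded.

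Before differentiating, I would record the structural identities that make the algebra collapse. Since every column of $B$ has exactly one $+1$ and one $-1$, we have $B^T\mathbf{1}_n=0$ and $\mathbf{1}_n^TB=0$; hence $(B^T\otimes I_r)(\mathbf{1}_n\otimes\bar{\phi})=0$, so $\mathbf{y}=(B^T\otimes I_r)\mathbf{x}=(B^T\otimes I_r)\tilde{\mathbf{x}}$ and, by transposition, $\tilde{\mathbf{x}}^T(B\otimes I_r)=\mathbf{y}^T$. I would also use the factorization $M=B(B^TB)^+B^T$ recorded in the preliminaries. With these, the cross term becomes $\tilde{\mathbf{x}}^T\mathbf{u}=-\mathbf{y}^TK(t)\,\text{sgn}\{\mathbf{y}\}=-\sum_j\kappa_j|y_j|$, while the adaptive contribution $\sum_j(\kappa_j-\kappa^{*})\dot{\kappa}_j=\sum_j\kappa_j|y_j|-\kappa^{*}\sum_j|y_j|$ exactly cancels the $\sum_j\kappa_j|y_j|$ piece---this cancellation is the whole point of setting $\dot{\kappa}_j=|y_j|$, and $\kappa_j(t_0)\ge1$ merely keeps $K(t)$ positive definite throughout.

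The remaining and decisive term is the mismatch $\tilde{\mathbf{x}}^T(M\otimes I_r)(\dot{\bm{\phi}}+\gamma\bm{\phi})$. Substituting the factorization of $M$ turns it into $\mathbf{y}^T\big((B^TB)^+\otimes I_r\big)(B^T\otimes I_r)(\dot{\bm{\phi}}+\gamma\bm{\phi})$, and a H\"older bound together with Assumption~\ref{Assump:Phi} in the form \eqref{Eq:Phi:Assump1}--\eqref{Eq:DotPhi:Assump1} bounds it by $c\sum_j|y_j|$ with $c=\big\|(B^TB)^+\big\|_{\infty}(\dot{\varphi}+\gamma\varphi)<\infty$. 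This is the step I expect to be the main obstacle: one must express the disturbance entirely through the measured disagreement $\mathbf{y}$, so that the per-edge adaptive gains can dominate it edge-by-edge, and verify that the constant $c$ is finite and independent of the trajectory. Choosing any $\kappa^{*}\ge c$ then yields $\dot{V}\le-\gamma\|\tilde{\mathbf{x}}\|_2^2+(c-\kappa^{*})\sum_j|y_j|\le-\gamma\|\tilde{\mathbf{x}}\|_2^2\le0$, valid for almost every $t$ in the Filippov sense.

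Finally I would extract convergence. Monotonicity of $V$ gives boundedness of $\tilde{\mathbf{x}}$ and of every $\kappa_j$; since each $\kappa_j$ is nondecreasing (as $\dot{\kappa}_j=|y_j|\ge0$) and bounded, the gains converge to finite limits, which also shows $y_j\in L_{1}$ and, crucially, that no specific initialization of $\mathbf{z}_0$ is needed---the $-\gamma\tilde{\mathbf{x}}$ damping in \eqref{DACError} directly attenuates the component of the error along $\mathbf{1}_n$, which is precisely where the robustness originates. Integrating the bound on $\dot{V}$ shows $\tilde{\mathbf{x}}\in L_{2}$, while boundedness of $\tilde{\mathbf{x}}$, $K(t)$, and the disturbance makes $\dot{\tilde{\mathbf{x}}}$ essentially bounded; a nonsmooth Barbalat argument then forces $\tilde{\mathbf{x}}(t)\to0$, as claimed.
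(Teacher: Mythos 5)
Your proof is correct, and it takes a genuinely more direct route than the paper. The paper splits the argument into two steps: first it shows the zero-sum condition $\sum_i \tilde{x}_i(t)\to \mathbf{0}_r$ exponentially (using $\mathbf{1}_n^T B = 0$ and the $-\gamma$ damping on $\mathbf{z}$), and then it proves consensus on the $\tilde{x}_i$ by running the adaptive Lyapunov argument on the edge-disagreement vector alone, with the weighted function $V = \tfrac{1}{2}\mathbf{y}^T\left(\left(B^TB\right)^+\otimes I_r\right)\mathbf{y} + \tfrac{1}{2}\sum_j\left(\kappa_j-\kappa^*\right)^2$, concluding $\mathbf{y}\to 0$ via Barbalat and combining the two facts. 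You instead take $V = \tfrac{1}{2}\tilde{\mathbf{x}}^T\tilde{\mathbf{x}} + \tfrac{1}{2}\sum_j\left(\kappa_j-\kappa^*\right)^2$ on the \emph{full} error; this works because every cross term in $\dot V$ --- the input term $\tilde{\mathbf{x}}^T\mathbf{u} = -\mathbf{y}^TK\,\text{sgn}\{\mathbf{y}\}$ and the disturbance term, after inserting $M = B\left(B^TB\right)^+B^T$ --- factors through $\mathbf{y} = \left(B^T\otimes I_r\right)\tilde{\mathbf{x}}$, so the same adaptive cancellation and the same H\"older/Assumption~2 bound apply verbatim, while the residual $-\gamma\|\tilde{\mathbf{x}}\|_2^2$ term damps the $\mathbb{R}^n_{\mathbf{1}}$-component that the paper must treat in its separate first step. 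The payoff of your version is a single Barbalat application yielding $\tilde{\mathbf{x}}\to 0$ directly, and it makes the robustness mechanism explicit (the damping, not any initialization, kills the error component along $\mathbf{1}_n$); the payoff of the paper's version is that the exponential rate $\gamma$ on the sum component is exhibited separately, which mirrors the design intuition (zero-sum enforcement plus consensus) and is reused later in the event-triggered analysis of Theorem~2. Your side remarks are also sound: boundedness plus monotonicity of each $\kappa_j$ indeed gives $y_j\in L_1$, and the Filippov caveat is handled since the cancellation $\mathbf{y}^TK\,\text{sgn}\{\mathbf{y}\} = \sum_j\kappa_j|y_j|$ holds for every selection of the set-valued sign map.
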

% ---------------------- Theorem ---------------------- Theorem ---------------------- Theorem ---------------------- Theorem ----------------------

\begin{proof}
\red{The proof} consists of two \red{steps}.
The first \red{step is to show} that the algorithm~\eqref{RDATsys1} exponentially satisfies the zero-sum condition $ \sum\limits_{i=1}^{n} \tilde{x}_i(t) = \mathbf{0}_r$.
The second \red{step is to show} that $\tilde{x}_i(t)$ asymptotically reach consensus.
If agents asymptotically reach consensus on $\tilde{x}_i(t)$s and $\tilde{x}_i(t)$s satisfy the zero-sum condition, then $\lim\limits_{t\rightarrow\infty} \tilde{x}_i(t) = \mathbf{0}_r$ for all $i\in\mathcal{I}$.

Left multiplying the error $\tilde{\mathbf{x}}(t)$ $=$ $\mathbf{z}(t)$ $+$ $\left(M\otimes I_r\right)\bm{\phi}(t)$ with $\left(\mathbf{1}^T_n \otimes I_r\right)$ yields $ \sum\limits_{i=1}^{n} \tilde{x}_i(t) = \sum\limits_{i=1}^{n} z_i(t)$, for all $t$. Now taking the time-derivative by substituting \eqref{Eqn:u1} in \eqref{RDATsys1a} and using the fact that $\mathbf{1}^TB = 0$ yields
\begin{align*}
  \frac{d}{dt} \left( \sum\limits_{i=1}^{n} z_i(t) \right) = -\gamma \sum\limits_{i=1}^{n} z_i(t).
\end{align*}
Thus $\sum\limits_{i=1}^{n} z_i(t)$, and therefore $\sum\limits_{i=1}^{n} \tilde{x}_i(t)$ is exponentially decreasing to $\mathbf{0}_r$ with the rate $\gamma$. This concludes the first \red{step} of the proof.

The second \red{step is to show} $\left( B^T \otimes I_r \right)\tilde{\mathbf{x}}(t)$ asymptotically decays to zero, i.e., the agents reach consensus on ${\tilde{\mathbf{x}}}(t)$. Note \red{that} $\left( B^T \otimes I_r \right)\tilde{\mathbf{x}}(t) = \left( B^T \otimes I_r \right)\mathbf{x}(t) = \mathbf{y}(t)$. Thus \red{it is equivalent to show} the asymptotic stability of
\begin{align*}
\dot{\mathbf{y}}(t) &= \left(B^T \otimes I_r \right) \left[-\gamma \mathbf{z}(t) +  \mathbf{u}(t) + \dot{\bm{\phi}}(t) \right],\\
&= \left(B^T \otimes I_r \right) \left[-\gamma \tilde{\mathbf{x}}(t) +  \mathbf{u}(t) + \dot{\bm{\phi}}(t) + \gamma \bm{\phi}(t) \right],
\end{align*}
where we used the equality condition $B^TM = B^T$.

Consider a nonnegative function of the form
%\footnote{{Note ${\mathbf{y}}^T(t) \left( \left(B^TB\right)^+ \otimes I_r\right) {\mathbf{y}}(t)$ $=$ $\tilde{\mathbf{x}}^T(t)  \left( B\left(B^TB\right)^+B^T \otimes I_r\right) \tilde{\mathbf{x}}(t)$ $=$ $\tilde{\mathbf{x}}^T(t) \left( M \otimes I_r\right) \tilde{\mathbf{x}}(t)$.}}
$$V = \frac{1}{2} {\mathbf{y}}^T(t) \left( \left(B^TB\right)^+ \otimes I_r\right) {\mathbf{y}}(t) + \frac{1}{2} \sum^{r\ell}_{j=1} \left(\kappa_j(t)-\kappa^*\right)^2,$$
where $\kappa^*$ is a constant to be specified. Taking the time derivative of $V$ yields
\begin{align*}
\dot{V} &= {\mathbf{x}}^T(t) \left( B \left(B^TB\right)^+ B^T \otimes I_r\right) \bigg[-\gamma \tilde{\mathbf{x}}(t) +  \mathbf{u}(t) + \dot{\bm{\phi}}(t) \\
&\qquad \qquad  + \gamma \bm{\phi}(t) \bigg] + \sum^{r\ell}_{j=1} \kappa_j(t)\dot{\kappa}_j(t) - \kappa^* \sum^{r\ell}_{j=1} \dot{\kappa}_j(t).
\end{align*}
Since $B\left(B^TB\right)^+B^T = M$ and $B \left(B^TB\right)^+ B^TB = B$, substituting \eqref{Eqn:u1} and \eqref{Eqn:kappa1} yields
\begin{align*}
&\dot{V} = -\gamma \tilde{\mathbf{x}}^T(t) \left( M \otimes I_r\right) \tilde{\mathbf{x}}(t) - \kappa^* \left\| \mathbf{y}(t) \right\|_1  \\
&\quad  + {\mathbf{y}}^T(t) \left( \left(B^TB\right)^+ \otimes I_r\right) \left( B^T \otimes I_r\right) \left( \dot{\bm{\phi}}(t) + \gamma \bm{\phi}(t) \right),\\
& \leq -\gamma \tilde{\mathbf{x}}^T(t) \left( M \otimes I_r\right) \tilde{\mathbf{x}}(t) - \kappa^* \left\| \mathbf{y}(t) \right\|_1  + \left\| \mathbf{y}(t) \right\|_1 \\
&  \times \left\| \left( \left(B^TB\right)^+ \otimes I_r\right) \right\|_{\infty} \left\| \left( B^T \otimes I_r\right) \left( \dot{\bm{\phi}}(t) + \gamma \bm{\phi}(t) \right) \right\|_{\infty}.
\end{align*}
%Since for all $j\in\{1,\ldots,r\ell\}$, $\kappa_j(t_0) \geq 1$ and $\dot{\kappa}_j(t) \geq 0$, we have $\kappa_j(t) \geq 1$ $\forall\,t\geq t_0$. Thus, we have
%\begin{align*}
%{\mathbf{y}}^T(t)K(t) \, \text{sgn}\left\{\left(B^T \otimes I_r\right){\mathbf{x}}(t)\right\}  = \sum^{r\ell}_{j=1} \kappa_j(t)|y_j(t)|.
%\end{align*}
%Therefore, substituting \eqref{Eqn:u1} and \eqref{Eqn:kappa1} yields
%\begin{align*}
%\dot{V} = {\mathbf{x}}^T(t) \left( M \otimes I_r\right)\dot{\bm{\phi}}(t) - \kappa^* \sum^{r\ell}_{j=1} |y_j(t)|.
%\end{align*}
%Now substituting $M = B \left(B^TB\right)^+B^T$ yields
%\begin{align*}
%&\dot{V} = {\mathbf{y}}^T(t) \left( \left(B^TB\right)^+B^T \otimes I_r\right)\dot{\bm{\phi}}(t) - \kappa^* \sum^{r\ell}_{j=1} \dot{\kappa}_j(t),\\
%&\leq \left\| \mathbf{y}(t) \right\|_1 \left\| \left( \left(B^TB\right)^+B^T \otimes I_r\right)\dot{\bm{\phi}}(t) \right\|_{\infty} - \kappa^* \sum^{r\ell}_{j=1} \left|y_j(t)\right|.
%\end{align*}
Note that $\left\| \left( \left(B^TB\right)^+ \otimes I_r\right) \right\|_{\infty}$ is upper bounded for a connected undirected network under consideration. Also from Assumption~\ref{Assump:Phi}, we have bounded $\left\| \left( B^T \otimes I_r\right) \left( \dot{\bm{\phi}}(t) + \gamma \bm{\phi}(t) \right) \right\|_{\infty}$. Therefore, if $\kappa^*$ is such that
$$\kappa^* \geq \left\| \left( \left(B^TB\right)^+ \otimes I_r\right) \right\|_{\infty} \left\| \left( B^T \otimes I_r\right) \left( \dot{\bm{\phi}}(t) + \gamma \bm{\phi}(t) \right) \right\|_{\infty}$$
we have
\[
\dot{V} \leq -\gamma {\mathbf{y}}^T(t) \left( \left(B^TB\right)^+ \otimes I_r\right) {\mathbf{y}}(t).
\]
\red{
where we used the fact that
\begin{align*}
{\mathbf{y}}^T(t) \left( \left(B^TB\right)^+ \otimes I_r\right) {\mathbf{y}}(t) &=\tilde{\mathbf{x}}^T(t) \left( M \otimes I_r\right) \tilde{\mathbf{x}}(t).
\end{align*}
}
%$$\dot{V} \leq -\|{\mathbf{y}}(t)\|_1 \leq -\sqrt{{\mathbf{y}}^T(t){\mathbf{y}}(t)} = -\sqrt{2\tilde{\mathbf{x}}^T(t)L\tilde{\mathbf{x}}(t)}.$$
%The last equality originate from the facts that $\frac{1}{2}BB^T = L$ and $B^T\mathbf{x}(t) = B^T\tilde{\mathbf{x}}(t)$. Note that $\tilde{\mathbf{x}}(t) = B\mathbf{z}(t) + M\bm{\phi}(t)$ and $\mathbf{1}_n^T\tilde{\mathbf{x}}(t) = 0$ for all $t\geq t_0$. Therefore ${\tilde{\mathbf{x}}^T(t)L\tilde{\mathbf{x}}(t)} \geq \lambda_2(L)\|\tilde{\mathbf{x}}(t)\|_2^2$, where $\lambda_2(L)$ denotes the algebraic connectivity of the network, and we have
%$$\dot{V} \leq -\sqrt{2\lambda_2(L)}\,\|\tilde{\mathbf{x}}(t)\|_2 \leq 0.$$
%
%Thus $V$ is upper bounded and therefore $\tilde{\mathbf{x}}(t)$ and $K(t)$ are bounded. Since $\dot{\bm{\phi}}(t)$ is assumed bounded, boundedness of $\tilde{\mathbf{x}}(t)$ and $K(t)$ imply bounded $\dot{\tilde{\mathbf{x}}}(t)$. Since $V$ is lower bounded at zero and $\dot{V} \leq -\sqrt{2\lambda_2(L)\tilde{\mathbf{x}}^T(t)\tilde{\mathbf{x}}(t)}$, we have $\displaystyle \int_{t_0}^{\infty} \left(\tilde{\mathbf{x}}^T(t)\tilde{\mathbf{x}}(t)\right)^{1/2} dt < \infty$, i.e., $\tilde{\mathbf{x}}(t)$ is square-integrable. Now based on the BarB\u{a}lat's Lemma (Lemma 3.2.5~\cite{ioannou2013robust}) we have $\displaystyle \lim_{t\rightarrow\infty} \tilde{\mathbf{x}}(t) = \mathbf{0}_n$.
Thus $V$ is upper bounded and therefore ${\mathbf{y}}(t)$ and $K(t)$ are bounded. Because of Assumption~\ref{Assump:Phi}, boundedness of ${\mathbf{y}}(t)$ and $K(t)$ implies bounded $\dot{{\mathbf{y}}}(t)$. Since $V$ is lower bounded \red{by} zero and $\dot{V} \leq  - \gamma \sigma_{\min}^+\left( \left(B^TB\right)^+ \right) \|{\mathbf{y}}(t)\|_2$, \red{where $\sigma_{\min}^+\left( \cdot \right)$ denotes the minimum non-zero singular value},
we have $\displaystyle \int_{t_0}^{\infty} \left({\mathbf{y}}^T(t){\mathbf{y}}(t)\right)^{1/2} dt < \infty$, i.e., ${\mathbf{y}}(t)$ is square-integrable. Now based on the BarB\u{a}lat's Lemma (Lemma 3.2.5~\cite{ioannou2013robust}), we have $\displaystyle \lim_{t\rightarrow\infty} {\mathbf{y}}(t) = \mathbf{0}_{r\ell}$. This completes the proof\red{.}
\end{proof}

\red{
\begin{remark}
Note that \red{the solution} to
\begin{align}\label{Eq:Filippov}
\begin{split}
  \dot{\mathbf{y}}(t) &=  \left(B^T \otimes I_r \right) \left[- \gamma \tilde{\mathbf{x}}(t) + \dot{\bm{\phi}}(t) + \gamma \bm{\phi}(t) \right]  \\ & \qquad \qquad  - \left(B^TB \otimes I_r\right)K(t) \text{sgn}\left\{ {\mathbf{y}}(t)\right\} ,
\end{split}
\end{align}
is understood in the Filippov sense~\cite{Filippov}. Define a vector field $\bm{\mathfrak{f}}\left(t,{\mathbf{y}}(t)\right):$ $\mathbb{R}\times\mathbb{R}^{r\ell} \mapsto \mathbb{R}^{r\ell}$ $\triangleq$ $\left(B^T \otimes I_r \right) \left[- \gamma \tilde{\mathbf{x}}(t) + \dot{\bm{\phi}}(t) + \gamma \bm{\phi}(t) \right]$ $-$ $\left(B^TB \otimes I_r\right)$ $K(t)$ $\text{sgn}\left\{ {\mathbf{y}}(t) \right\}$. Note that the Filippov set-valued map for the vector field $\bm{\mathfrak{f}}\left(t,{\mathbf{y}}(t)\right)$ is multiple-valued only at the point of discontinuity, i.e., at the origin. Therefore, the aforementioned stability analysis using a smooth Lyapunov function is valid because the function $V$ is decreasing along every Filippov solution of \eqref{Eq:Filippov} that starts on $\mathbb{R}^{r\ell}\backslash\{\mathbf{0}\}$. Thus, ${\mathbf{y}}(t)$ is globally asymptotically stable.
\end{remark}
}

\begin{remark}
Note that the robust dynamic average consensus algorithm in \eqref{RDATsys1} only requires the existence of the upper bound $\varphi$ and $\dot{\varphi}$. These bounds are not needed for the implementation of the algorithm.
\end{remark}

%\TY{I understood that we need to compute the term $\left(B \otimes I_r\right)K(t)\text{sgn}\left\{\left(B^T \otimes I_r\right) \mathbf{x}(t) \right\}$ in (11). Then we need to introduce the dimensional vector with the adaptive law, finally, the new algorithm is (13) and (12). }
%
%\TY{I think that this is very important, since 1) we are modifying or implementing (11) by (12) and (13), and 2) it actually relates the event-triggered algorithm (15) and (16). I think that it is better to emphasize this discussion, maybe even put them into a new subsection named after implementation?}

\red{\subsection{Implementation}}

Even though the vector notation used in previous section makes the analysis of the algorithm much easier, it fails to provide the intuition required for distributed implementation. Therefore, here we discuss the distributed implementation of the robust dynamic average consensus algorithm.

After substituting the control~\eqref{Eqn:u1} the robust dynamic average consensus algorithm \eqref{RDATsys1} can be written as
\begin{align}\label{RDATsys2}
  \dot{\mathbf{z}}(t) &=  -\gamma\mathbf{z}(t) - \left(B \otimes I_r\right)K(t)\text{sgn}\left\{\left(B^T \otimes I_r\right) \mathbf{x}(t) \right\}.
\end{align}
Here $K(t)$ can be considered as a pseudo edge-weights multiplying the terms\footnote{In order to simplify the analysis, readers may assume $r=1$.} $\text{sgn}\{ x_i(t) - x_l(t) \}$, $i,l\,\in \mathcal{I}$. In order to compute the term $\left(B \otimes I_r\right)K(t)\text{sgn}\left\{\left(B^T \otimes I_r\right) \mathbf{x}(t) \right\}$ in a distributed manner, agents need to either coordinate among their neighbors to make sure that the gain multiplying $\text{sgn}\{ x_i(t) - x_l(t) \}$ is the same as the gain multiplying $\text{sgn}\{ x_l(t) - x_i(t) \}$ or  constantly exchange their link gain $\kappa_j(t)$ along with their state $x_i(t)$. Due to the nature of adaptive law \eqref{Eqn:kappa1}, agents can easily coordinate their link gains by simply exchanging the initial gains $\kappa_j(t_0)$. Note that when $r=1$, there is a single scaler gain $\kappa_j(t)$ associated with the link $e_j \in \mathcal{E}$, for all $j\in\{1,\ldots,\ell\}$. If the link $e_j$ is between nodes $v_i$ and $v_l$, i,e., $e_j = (v_i,v_l)$, then we can use the notation $\mu_{i,l}(t)$ to denote $\kappa_j(t)$. For $r>1$, $\mu_{i,l}(t)$ is an $r$ dimensional vector with the adaptive law
\begin{equation}\label{Eqn:kappa2}
\dot{\mu}_{i,l}(t) = \left| x_i(t) - x_l(t) \right|, \quad \forall\,i,\,l\,:(v_i,v_l)\in\mathcal{E}.
\end{equation}
Let $\mu_{i,l}(t) =0$ and $\dot{\mu}_{i,l}(t) =0$ for all $t \geq t_0$ and $i,\,l\,:(v_i,v_l)\notin\mathcal{E}$. Note that the agents coordinate the initial condition $\mu_{i,l}(t)$ to \red{ensure that} $\mu_{i,l}(t) = \mu_{l,i}(t)$ for all $t\geq t_0$. Now \eqref{RDATsys2} can be written as $\forall i\in\mathcal{I}$
\begin{align}\label{RDATsys3a}
  \dot{z}_i(t) &=  -\gamma{z}_i(t) - 2 \sum^{n}_{j=1} \, \text{diag}\left[ \mu_{i,j}(t) \right] \text{sgn}\{ x_i(t) - x_j(t) \},
\end{align}
where $\text{diag}\left[ \mu_{i,j}(t) \right]$ is an $r\times r$ diagonal matrix with $\mu_{i,j}(t)$ as its diagonal entries. The constant $2$ is a byproduct of the way in which we defined $B$, i.e., each undirected edge is considered as two distinct directed edges. Each agent computes $x_i(t)$ as
\begin{align}\label{RDATsys3b}
  x_i(t) &=  {z}_i(t) + \phi_i(t).
\end{align}

\section{Dynamic Event-Triggered Robust Dynamic Average Consensus Algorithm}\label{sec:ETRobustDAC}

%Substituting the control~\eqref{Eqn:u1} the robust dynamic average consensus algorithm \eqref{RDATsys1} and the adaptive law \eqref{Eqn:kappa1} can be written as
%\begin{align}
%\begin{split}\label{RDATsys2a}
%  \dot{\mathbf{z}}(t) &=  -\gamma\mathbf{z}(t) - \left(B \otimes I_r\right)K(t)\text{sgn}\left\{\left(B^T \otimes I_r\right) \mathbf{x}(t) \right\}
%\end{split}\\
%\dot{\mathbf{k}}(t) &= \left| \left(B^T \otimes I_r\right) \mathbf{x}(t) \right| \label{RDATsys2b},
%\end{align}
%where $\mathbf{k}(t) \triangleq  \begin{bmatrix} k_1(t) & \ldots & k_{r\ell}(t)\end{bmatrix}^T \in \mathbb{R}^{r\ell}$ is a vector consisting of the diagonal entries of the gain matrix $K(t)$.

In order to implement the distributed algorithms \eqref{RDATsys3a} and \eqref{Eqn:kappa2}, every agent $v_i\in\mathcal{V}$ has to know the continuous-time state $x_j(t) = z_j(t) + \phi_j(t)$; $\forall v_j\in{\mathcal{N}_i}$. In other words, continuous communication between agents is needed. However, distributed networks are normally resources-constrained and communication is energy consuming.
In order to avoid continuous communication, here we propose an event-triggered version of the robust dynamic average consensus algorithm.

\subsection{Dynamic Event-Triggered Algorithm}

Inspired by the idea of event-triggered control for multi-agent systems \cite{Dimarogonas12}, we consider the following event-triggered versions of the robust dynamic average consensus algorithm and the adaptive law given in \eqref{RDATsys3a} and \eqref{Eqn:kappa2}, respectively:
\begin{align}
\begin{split}\label{RDATsys4a}
    \dot{z}_i(t) &=  -\gamma{z}_i(t) - 2 \sum^{n}_{j=1} \, \text{diag}\left[ \mu_{i,j}(t) \right] \text{sgn}\{ \hat{x}_i(t) - \hat{x}_j(t) \},
\end{split}\\
    \dot{\mu}_{i,j}(t) &= \left| \hat{x}_i(t) - \hat{x}_j(t) \right| \label{RDATsys4b},
\end{align}
where $\hat{x}_i(t) = x_i(t^i_{k_i(t)})$ denotes the last broadcasted estimate $x_i(t)$ of agent $v_i$ and $t^i_{k_i(t)} = \max\,\{ t^i_k: t^i_k \leq t \}$ and $t^i_0,\,t^i_1,\,\ldots,\,$ is the sequence of event times of agent $v_i$. Note that during inter-event time, the signals $\hat{x}_i(t)$ are held constant for all $i\in\mathcal{I}$.

%\TY{Can we analysis directly the following
%\begin{equation*}
%\mathbf{u}(t) = - \left(B \otimes I_r\right)\,\hat{K}(t) \, \text{sgn}\left\{\left(B^T \otimes I_r\right){\mathbf{\hat{x}}}(t)\right\},
%\end{equation*}
%where $\hat{K}(t) \in \mathbb{R}^{r\ell\times r\ell}$ is a diagonal gain matrix with the update law
%\begin{equation*}
%\dot{\hat{\kappa}}_j(t) = |\hat{y}_j(t)|, \quad \kappa_j(t_0) \geq 1,\quad \forall j\in\{1,\ldots,r\ell\},
%\end{equation*}
%where
%\[
%\mathbf{\hat{y}}(t) = \left( B^T \otimes I_r \right)\mathbf{\hat{x}}(t)
%\]
%}

Define $\hat{\mathbf{x}}(t) \triangleq \begin{bmatrix} \hat{x}^T_1(t) & \ldots & \hat{x}^T_n(t)\end{bmatrix}^T$ and $\hat{\bm{\xi}}(t) \triangleq  \begin{bmatrix} \hat{\xi}^T_1(t) & \ldots & \hat{\xi}^T_{\ell}(t)\end{bmatrix}^T$ $=$ $\left( B^T \otimes I_r \right)\hat{\mathbf{x}}(t)$, where $\hat{x}_i(t) \in \mathbb{R}^r$ for all $i\in\mathcal{I}$ and $\hat{\xi}_j(t) \in \mathbb{R}^r$ for all $j\in\{ i,\ldots,\ell\}$. Let $\widehat{K}(t)$ denotes the adaptive gain obtained from \eqref{RDATsys4b}. Now following \eqref{DACError}, the dynamic average consensus error can be written in the following compact form:
\begin{align}\label{Eqn:DAC_ET_error}
\begin{split}
\dot{\tilde{\mathbf{x}}}(t) = -\gamma \tilde{\mathbf{x}}(t) &- \left(B \otimes I_r \right) \widehat{K}(t)\text{sgn}\left\{ \hat{\bm{\xi}}(t) \right\} \\&+ \left(M\otimes I_r\right) \left( \dot{\bm{\phi}}(t) + \gamma \bm{\phi}(t) \right).
\end{split}
\end{align}
Define $\bm{w}(t) \in \mathbb{R}^{rn} \triangleq \begin{bmatrix} {w}^T_1(t) & \ldots & {w}^T_n(t)\end{bmatrix}^T = \left(B \otimes I_r\right)\,\widehat{K}(t) \, \text{sgn}\left\{ \hat{\bm{\xi}}(t) \right\}$. Now we have
\begin{align}\label{Eqn:DAC_ET_error1}
\begin{split}
\dot{\tilde{\mathbf{x}}}(t) = -\gamma \tilde{\mathbf{x}}(t) &- \bm{w}(t) + \left(M\otimes I_r\right) \left( \dot{\bm{\phi}}(t) + \gamma \bm{\phi}(t) \right).
\end{split}
\end{align}

\red{\subsection{Convergence Result}}

Before we present the asymptotic convergence of the event-triggered algorithm, we make the following assumption:
% ---------------------- Assumption ---------------------- Assumption ---------------------- Assumption ---------------------- Assumption ----------------------
\begin{assumption}\label{Assump:beta}
Each agent $v_i$ has a local gain $\beta_i$ such that $\forall\,i\in\mathcal{I}$
\begin{equation}\label{Eqn:beta}
  \beta_i \geq \left( \gamma\varphi + \dot{\varphi} \right) \| \left( B\left(B^TB\right)^+ \otimes I_r\right) \|_{\infty}.
\end{equation}
\end{assumption}
% ---------------------- Assumption ---------------------- Assumption ---------------------- Assumption ---------------------- Assumption ----------------------
Let $\bm{\varepsilon}(t) \triangleq \begin{bmatrix} \varepsilon^T_1(t) & \ldots & \varepsilon^T_n(t)\end{bmatrix}^T = {\mathbf{x}}(t) - \hat{\mathbf{x}}(t)$. Motivated by \cite{Girard15} and \cite{Xinlei}, we introduce the following internal dynamics to facilitate the design of dynamic triggering mechanism:
\begin{align}\label{Eqn:eta}
\dot{\eta}_i(t) = -\alpha_i\,\eta_i(t) - \delta_i \left( \beta_i\mathbf{1}^T_r | \varepsilon_i(t) | - w_i^T(t)\varepsilon_i(t) \right),\,\,i\in\mathcal{I},
\end{align}
where $\eta_i(t_0) > 0$, $\alpha_i >0$ and $\delta_i \geq 1$ are design parameters and can be arbitrarily chosen.
The internal variables $\eta_i(t)$ are incorporated into the triggering law as shown next.

%\red{At some where, we need to cite Xinlei's Thesis or maybe the original TAC 2015 paper on dynamic event-triggered for the single agent system.}

% ---------------------- Theorem ---------------------- Theorem ---------------------- Theorem ---------------------- Theorem ----------------------
\begin{theorem}\label{Theorem02}
Given Assumptions~\ref{Assump:Graph},~\ref{Assump:Phi}~and~\ref{Assump:beta}, the event-triggered robust dynamic average consensus algorithm in \eqref{RDATsys4a} and the adaptive law \eqref{RDATsys4b} guarantee that the average consensus error, $\tilde{\mathbf{x}}(t)$, asymptotically decays to zero for any initial condition $\mathbf{z}_0$, if $\,\forall\,i\in\mathcal{I}$, the triggering times $\left\{ t^i_k\right\}^{\infty}_{k=1}$ are determined as $t^i_1 = t_0$ and
\begin{align}\label{ETcond}
\begin{split}
t^i_{k+1} &= \min\big\{ t: \\ &\theta_i \left( \beta_i\mathbf{1}^T_r | \varepsilon_i(t) | - w_i^T(t)\varepsilon_i(t) \right) \geq \eta_i(t), \, t\geq t^i_k \big\},
 \end{split}
\end{align}
where $\theta_i \in (0,1)$ is a positive scalar design parameter, $\beta_i$ is defined in Assumption~\ref{Assump:beta}, and $\eta_i(t)$ is from \eqref{Eqn:eta}.
\end{theorem}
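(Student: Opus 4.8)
The plan is to follow the two-step template of Theorem~\ref{Theorem01}: first establish the zero-sum condition $\sum_{i=1}^n \tilde{x}_i(t)\to\mathbf{0}_r$, then establish consensus $\mathbf{y}(t)=(B^T\otimes I_r)\tilde{\mathbf{x}}(t)\to\mathbf{0}_{r\ell}$; together these force $\tilde{\mathbf{x}}(t)\to\mathbf{0}$. The zero-sum step carries over essentially verbatim, because left-multiplying \eqref{Eqn:DAC_ET_error} by $(\mathbf{1}_n^T\otimes I_r)$ annihilates the triggered input $(B\otimes I_r)\widehat{K}(t)\,\text{sgn}\{\hat{\bm{\xi}}(t)\}$ via $\mathbf{1}_n^T B=0$, regardless of the sampling, so $\sum_i z_i(t)$ and hence $\sum_i \tilde{x}_i(t)$ still decay exponentially at rate $\gamma$. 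The work is therefore concentrated in the consensus step, where the sampled signal $\hat{\bm{\xi}}(t)$ rather than $\mathbf{y}(t)$ drives the dynamics.

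Before treating consensus I would establish that each internal variable stays strictly positive, $\eta_i(t)>0$ for all $t\ge t_0$. Between two consecutive triggers of agent $v_i$, \eqref{ETcond} has not fired, so $\theta_i\big(\beta_i\mathbf{1}_r^T|\varepsilon_i(t)|-w_i^T(t)\varepsilon_i(t)\big)<\eta_i(t)$; substituting into \eqref{Eqn:eta} gives $\dot{\eta}_i(t)\ge-\big(\alpha_i+\delta_i/\theta_i\big)\eta_i(t)$, so a comparison argument keeps $\eta_i(t)\ge\eta_i(t_0)e^{-(\alpha_i+\delta_i/\theta_i)(t-t_0)}>0$, and at each trigger $\varepsilon_i$ resets to zero so positivity is preserved across events. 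For consensus I would then reuse the Lyapunov function of Theorem~\ref{Theorem01} with the sampled gains: with $P\triangleq\big((B^TB)^+\otimes I_r\big)$, set $V=\tfrac12\mathbf{y}^T(t)P\,\mathbf{y}(t)+\tfrac12\sum_{j=1}^{r\ell}(\hat{\kappa}_j(t)-\kappa^*)^2$. Differentiating along \eqref{Eqn:DAC_ET_error1}, using $B^TM=B^T$ and $B(B^TB)^+B^T=M$ and splitting $\mathbf{y}=\hat{\bm{\xi}}+(B^T\otimes I_r)\bm{\varepsilon}$, the gain-dependent cross terms collapse to $-\bm{w}^T(t)\bm{\varepsilon}(t)=-\sum_i w_i^T(t)\varepsilon_i(t)$, the part of the drive proportional to $\hat{\bm{\xi}}$ is dominated by $-\kappa^*\|\hat{\bm{\xi}}\|_1$ exactly as in Theorem~\ref{Theorem01}, and the residual perturbation $\bm{\varepsilon}^T(B(B^TB)^+\otimes I_r)(B^T\otimes I_r)(\dot{\bm{\phi}}+\gamma\bm{\phi})$ is bounded node-wise, using H\"older's inequality and Assumption~\ref{Assump:beta}, by $\sum_i\beta_i\mathbf{1}_r^T|\varepsilon_i(t)|$. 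This yields $\dot{V}\le-\gamma\,\mathbf{y}^TP\,\mathbf{y}+\sum_i f_i(t)$ with $f_i(t)\triangleq\beta_i\mathbf{1}_r^T|\varepsilon_i(t)|-w_i^T(t)\varepsilon_i(t)$, the very quantity appearing in \eqref{Eqn:eta}--\eqref{ETcond}.

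The crux is then to control the integral of $f_i$ rather than its pointwise sign. Integrating \eqref{Eqn:eta} gives $\delta_i\int_{t_0}^t f_i\,ds=\eta_i(t_0)-\eta_i(t)-\alpha_i\int_{t_0}^t\eta_i\,ds\le\eta_i(t_0)$, since $\eta_i(t)>0$ and $\alpha_i>0$. Integrating the Lyapunov bound then gives $V(t)+\gamma\int_{t_0}^t\mathbf{y}^TP\,\mathbf{y}\,ds\le V(t_0)+\sum_i\eta_i(t_0)/\delta_i$, so $V$ is bounded (hence $\mathbf{y}$ and $\widehat{K}$ are bounded, and by Assumption~\ref{Assump:Phi} so is $\dot{\mathbf{y}}$) and $\int_{t_0}^\infty\mathbf{y}^TP\,\mathbf{y}\,ds<\infty$. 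Because $P$ is positive definite on $\mathrm{range}(B^T\otimes I_r)$, which contains $\mathbf{y}$, the signal $\mathbf{y}$ is square-integrable, and Barb\u{a}lat's Lemma gives $\mathbf{y}(t)\to\mathbf{0}$. Combined with the zero-sum step this yields $\lim_{t\to\infty}\tilde{\mathbf{x}}(t)=\mathbf{0}$.

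I expect the main obstacle to be precisely that $f_i$ is sign-indefinite: $w_i(t)$ contains the adaptive gains $\mu_{i,j}(t)$, which grow during the transient, so one cannot claim $\dot{V}\le0$ pointwise as in Theorem~\ref{Theorem01}. The role of the internal dynamics $\eta_i$ is exactly to convert the troublesome term $\sum_i f_i$ into a finite integral through its own positivity, which is what makes the dynamic (rather than static) triggering essential here; a secondary point is that the boundedness of $\dot{\mathbf{y}}$ needed for Barb\u{a}lat is available only after the integral inequality has secured boundedness of the states and gains. Finally, the Zeno-freeness emphasized in the introduction would be argued separately: on any finite horizon the established boundedness bounds $\dot{\varepsilon}_i=\dot{x}_i$, while $\eta_i(t)$ remains above a positive exponential, so $f_i$ needs a strictly positive time to climb from $0$ (at a trigger) to the threshold $\eta_i(t)/\theta_i$, giving a positive lower bound on inter-event times and precluding accumulation.
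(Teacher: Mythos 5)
Your proposal is correct, and for most of its length it follows the paper's own proof: the same zero-sum step via $\mathbf{1}_n^TB=0$, the same Lyapunov core \eqref{Eqn:V} (your $V$ with sampled gains is exactly the paper's, since your $\mathbf{y}$ is the paper's $\bm{\xi}$), the same expansion and cancellations leading to the bound \eqref{Eqn:Vdot_bound}, i.e.\ $\dot V \le -\gamma\tilde{\mathbf{x}}^T\left(M\otimes I_r\right)\tilde{\mathbf{x}} + \sum_i f_i(t)$ with $f_i \triangleq \beta_i\mathbf{1}^T_r|\varepsilon_i| - w_i^T\varepsilon_i$, and the same square-integrability-plus-Barb\u{a}lat ending. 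Where you genuinely diverge is in how the sign-indefinite terms $f_i$ are absorbed. The paper augments the Lyapunov function, $W = V + \sum_i\eta_i(t)$, and cancels $\sum_i f_i$ pointwise against the $-\delta_i f_i$ term of $\dot\eta_i$, concluding $\dot W \le -\gamma\tilde{\mathbf{x}}^T\left(M\otimes I_r\right)\tilde{\mathbf{x}} - \sum_i\alpha_i\eta_i \le 0$; you instead keep $V$ untouched and integrate \eqref{Eqn:eta}, using only positivity of $\eta_i$ (which you prove up front, and which the paper records only later as \eqref{EtaBound}) to get $\delta_i\int_{t_0}^{t}f_i\,ds \le \eta_i(t_0)$, turning $\sum_i f_i$ into a finite additive constant in the integrated inequality. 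These are an integral/differential pair of the same estimate, but the difference is not purely cosmetic: the paper's pointwise step discards $\sum_i(1-\delta_i)f_i$ as nonpositive, which silently assumes $f_i\ge 0$ — precisely the sign-definiteness you observe is unavailable, since $w_i$ carries the adaptive gains $\mu_{i,j}$, which need not stay below $\beta_i$ — so the paper's chain of inequalities is airtight only for $\delta_i=1$, whereas your integral route needs no sign information on $f_i$ and works for any $\delta_i>0$. What the paper's route buys in exchange is a single monotone nonincreasing function $W$, from which boundedness of all states, gains, and internal variables $\eta_i$ falls out at once (it also implicitly needs $\eta_i\ge 0$ for $W$ to be lower bounded, the same positivity fact you use). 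Your deferral of Zeno exclusion to a separate argument is also consistent with the paper, which handles it in Theorem~\ref{Theorem03}.
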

% ---------------------- Theorem ---------------------- Theorem ---------------------- Theorem ---------------------- Theorem ----------------------

\begin{proof}
Define ${\bm{\xi}}(t)$ $=$ $\left( B^T \otimes I_r \right){\mathbf{x}}(t)$. Since $\left( B^T \otimes I_r \right){\mathbf{x}}(t)$ $=$ $\left( B^T \otimes I_r \right)\tilde{\mathbf{x}}(t)$, from \eqref{Eqn:DAC_ET_error} we have
\begin{align}\label{Eqn:DAC_ET_error3}
\begin{split}
\dot{{\bm{\xi}}}(t) = -\gamma {{\bm{\xi}}}(t) &- \left(B^TB \otimes I_r \right) \widehat{K}(t)\text{sgn}\left\{ \hat{\bm{\xi}}(t) \right\} \\&+ \left(B^T\otimes I_r\right) \left( \dot{\bm{\phi}}(t) + \gamma \bm{\phi}(t) \right).
\end{split}
\end{align}
Now consider the following function
\begin{align}\label{Eqn:V}
\begin{split}
V &= \frac{1}{2} {\bm{\xi}}^T(t) \left( \left(B^TB\right)^+ \otimes I_r\right) {\bm{\xi}}(t) \\&\quad + \frac{1}{2} \sum_{i=1}^{n} \sum_{j=1}^{n}  \left( \mu_{i,j}(t) - \mu^* \right)^T\left( \mu_{i,j}(t) - \mu^* \right),
\end{split}
\end{align}
where $\mu^* \in\mathbb{R}^r$ is to be determined. Now taking the time derivative of $V$ along \eqref{Eqn:DAC_ET_error3} yields
\begin{align*}
&\dot{V} = {\mathbf{x}}^T(t) \left( M \otimes I_r\right) \bigg[-\gamma \tilde{\mathbf{x}}(t) + \dot{\bm{\phi}}(t) + \gamma \bm{\phi}(t)\\
&- \left(B \otimes I_r \right) \widehat{K}(t)\text{sgn}\left\{ \hat{\bm{\xi}}(t) \right\}  \bigg] + \sum_{i,j=1}^{n}  \left( \mu_{i,j}(t) - \mu^* \right)^T \dot{\mu}_{i,j}(t) .
\end{align*}
Note that
\begin{align*}
&{\mathbf{x}}^T(t) \left(B \otimes I_r \right) \widehat{K}(t)\text{sgn}\left\{ \hat{\bm{\xi}}(t) \right\}  = \bm{\varepsilon}^T(t)\left(B \otimes I_r \right) \widehat{K}(t)\\
&\qquad \qquad \times \text{sgn}\left\{ \hat{\bm{\xi}}(t) \right\}  + \sum_{i,j=1}^{n}  \mu_{i,j}^T(t) \left| \hat{x}_i(t) - \hat{x}_j(t) \right|,
\end{align*}
Thus we have
\begin{align*}
&\dot{V} = -\gamma \tilde{\mathbf{x}}^T(t) \left( M \otimes I_r\right) \tilde{\mathbf{x}}(t) + {\mathbf{x}}^T(t) \left( M \otimes I_r\right) \bigg( \dot{\bm{\phi}}(t) \\
& \qquad + \gamma \bm{\phi}(t) \bigg) - \bm{\varepsilon}^T(t)\left(B \otimes I_r \right) \widehat{K}(t) \text{sgn}\left\{ \hat{\bm{\xi}}(t) \right\}  \\
& \qquad - \sum_{i,j=1}^{n}  \left( \mu^* \right)^T  \left| \hat{x}_i(t) - \hat{x}_j(t) \right|.
\end{align*}
Without loss of generality, we let $\mu^* = \bar{\mu} \mathbf{1}_r$, where $\bar{\mu}$ is a positive constant \red{to be determined}. Thus we have
\begin{align*}
\sum_{i,j=1}^{n}  \left( \mu^* \right)^T  \left| \hat{x}_i(t) - \hat{x}_j(t) \right| &= \bar{\mu} \sum_{i,j=1}^{n}  \mathbf{1}_r^T  \left| \hat{x}_i(t) - \hat{x}_j(t) \right|,\\
&= \bar{\mu} \left\| \hat{\bm{\xi}}(t) \right\|_1.
\end{align*}
Note
\begin{align*}
\bm{\varepsilon}^T(t)\left(B \otimes I_r \right) \widehat{K}(t) \text{sgn}\left\{ \hat{\bm{\xi}}(t) \right\} = \sum_{i=1}^{n} \varepsilon_i^T(t)w_i(t)
\end{align*}
and
\begin{align*}
& {\mathbf{x}}^T(t) \left( M \otimes I_r\right) \left( \dot{\bm{\phi}}(t) + \gamma \bm{\phi}(t) \right) = \bm{\varepsilon}^T(t) \left( M \otimes I_r\right) \bigg( \dot{\bm{\phi}}(t) \\
&+ \gamma \bm{\phi}(t) \bigg) + \hat{\bm{\xi}}^T(t) \left( \left(B^TB\right)^+ B^T \otimes I_r\right) \left( \dot{\bm{\phi}}(t) + \gamma \bm{\phi}(t) \right)\\
&\leq \| \bm{\varepsilon}(t) \|_1 \| \left( B\left(B^TB\right)^+ \otimes I_r\right) \|_{\infty} \\
& \times \| \left( B^T \otimes I_r\right) \bigg( \dot{\bm{\phi}}(t) + \gamma \bm{\phi}(t) \bigg) \|_{\infty} + \| \hat{\bm{\xi}}(t) \|_1  \\
& \times \| \left( \left(B^TB\right)^+ \otimes I_r\right) \|_{\infty} \| \left( B^T \otimes I_r\right) \bigg( \dot{\bm{\phi}}(t) + \gamma \bm{\phi}(t) \bigg) \|_{\infty}\\
&\leq \| \bm{\varepsilon}(t) \|_1 \| \left( B\left(B^TB\right)^+ \otimes I_r\right) \|_{\infty} \left( \gamma\varphi + \dot{\varphi} \right)\\
& \qquad + \| \hat{\bm{\xi}}(t) \|_1  \| \left( \left(B^TB\right)^+ \otimes I_r\right) \|_{\infty} \left( \gamma\varphi + \dot{\varphi} \right),
\end{align*}
where the second inequality follows from Assumption \ref{Assump:Phi}. Now an upper bound on $\dot{V}$ can be obtained as
\begin{align*}
&\dot{V} \leq  -\gamma \tilde{\mathbf{x}}^T(t) \left( M \otimes I_r\right) \tilde{\mathbf{x}}(t) - \sum_{i=1}^{n} \varepsilon_i^T(t)w_i(t) \\
&\quad+ \| \bm{\varepsilon}(t) \|_1 \| \left( B\left(B^TB\right)^+ \otimes I_r\right) \|_{\infty} \left( \gamma\varphi + \dot{\varphi} \right)  \\
& \quad + \| \hat{\bm{\xi}}(t) \|_1  \| \left( \left(B^TB\right)^+ \otimes I_r\right) \|_{\infty} \left( \gamma\varphi + \dot{\varphi} \right) - \bar{\mu} \left\| \hat{\bm{\xi}}(t) \right\|_1.
\end{align*}
If $\bar{\mu}$ is selected such that $$\bar{\mu}\geq\left( \gamma\varphi + \dot{\varphi} \right)\| \left( \left(B^TB\right)^+ \otimes I_r\right) \|_{\infty},$$ then we have
\begin{align}\label{Eqn:Vdot_bound}
\begin{split}
\dot{V} \leq  -\gamma \tilde{\mathbf{x}}^T(t) \left( M \otimes I_r\right) \tilde{\mathbf{x}}(t) &- \sum_{i=1}^{n} w_i^T(t) \varepsilon_i(t) \\
&+  \sum_{i=1}^{n} \beta_i \, \mathbf{1}_r^T \left|  \varepsilon_i(t) \right|,
\end{split}
\end{align}
where $\beta_i$ is from Assumption~\ref{Assump:beta}. Now consider a Lyapunov function candidate as follows
\begin{align}
W = V + \sum_{i=1}^{n} {\eta}_i(t),
\end{align}
where $V$ is given in \eqref{Eqn:V}. Thus from \eqref{Eqn:Vdot_bound} and \eqref{Eqn:eta} we have,
\begin{align*}
&\dot{W} = \dot{V} + \sum_{i=1}^{n} \dot{\eta}_i(t)\\
&\leq -\gamma \tilde{\mathbf{x}}^T(t) \left( M \otimes I_r\right) \tilde{\mathbf{x}}(t) +  \sum_{i=1}^{n} \left( \beta_i \, \mathbf{1}_r^T \left|  \varepsilon_i(t) \right| -w_i^T(t) \varepsilon_i(t) \right) \\&-\sum_{i=1}^{n} \alpha_i\,\eta_i(t) - \sum_{i=1}^{n} \delta_i \left( \beta_i\mathbf{1}^T_r | \varepsilon_i(t) | - w_i^T(t)\varepsilon_i(t) \right)\\
&\leq -\gamma \tilde{\mathbf{x}}^T(t) \left( M \otimes I_r\right) \tilde{\mathbf{x}}(t)-\sum_{i=1}^{n} \alpha_i\,\eta_i(t) \leq 0.
\end{align*}
Thus $W$ is upper bounded and therefore ${\bm{\xi}}(t)$, $\widehat{K}(t)$, and $\bm{\eta}(t)$ are all bounded. Because of Assumption~\ref{Assump:Phi}, boundedness of ${\bm{\xi}}(t)$ and $\widehat{K}(t)$ implies bounded $\dot{\bm{\xi}}(t)$. Since $W$ is lower bounded \red{by} zero and $\dot{W} \leq  -\gamma \tilde{\mathbf{x}}^T(t) \left( M \otimes I_r\right) \tilde{\mathbf{x}}(t)$, we have ${\bm{\xi}}(t)$ is square-integrable. Now based on the BarB\u{a}lat's Lemma (Lemma 3.2.5~\cite{ioannou2013robust}), we have $\displaystyle \lim_{t\rightarrow\infty} {\bm{\xi}}(t) = \mathbf{0}_{r\ell}$. Thus agents asymptotically reach consensus on the dynamic average consensus error $\tilde{\mathbf{x}}(t)$. From \eqref{Eqn:DAC_ET_error} we have $\sum\limits_{i=1}^{n} \tilde{x}_i(t)$ is exponentially decreasing to $\mathbf{0}_r$ at the rate $\gamma$. Thus the agents asymptotically reach consensus on $\tilde{x}_i(t)$s and $\tilde{x}_i(t)$s satisfy the zero-sum condition, therefore $\lim_{t\rightarrow\infty} \tilde{x}_i(t) = \mathbf{0}_r$ for all $i\in\mathcal{I}$.
\end{proof}

\red{
\subsection{Exclusion of Zeno Behavior}}

Here we prove that there is no Zeno behavior by contradiction.
But first, based on the Lyapunov analysis given in the proof of Theorem~\ref{Theorem02}, we can conclude that for all $i\in \mathcal{I}$ there exists positive bounds $x^i_{\max}$ and $w^i_{\max}$ such that
\begin{align*}
 \displaystyle \sup_{t\in[0,\infty)} \, \| x_i(t)\|_{\infty} &\leq x_{\max}^i \\
  \displaystyle \sup_{t\in[0,\infty)} \, \| w_i(t)\|_{\infty} &\leq w_{\max}^i.
\end{align*}
Thus based on \eqref{Eqn:DAC_ET_error1} and Assumption \ref{Assump:Phi}, we have
\begin{align*}
 \displaystyle \sup_{t\in[0,\infty)} \, \| \dot{x}_i(t)\|_{\infty} &\leq \dot{x}_{\max}^i.
\end{align*}
Before we present the main result of this section, note that from the triggering mechanism \eqref{ETcond}, we have
\begin{align}
&\theta_i \left( \beta_i\mathbf{1}^T_r | \varepsilon_i(t) | - w_i^T(t)\varepsilon_i(t) \right) \leq \eta_i(t), \quad \forall\,t\geq t_0.
\end{align}
Thus,
\begin{align}
\dot{\eta}_i(t) \geq -\alpha_i\,\eta_i(t) - \frac{\delta_i}{\theta_i} \eta_i(t), \quad \forall\,t\geq t_0.
\end{align}
Therefore,
\begin{align} \label{EtaBound}
{\eta}_i(t) \geq \eta_i(t_0) e ^{-\left( \alpha_i + \frac{\delta_i}{\theta_i}  \right)t} > 0, \quad \forall\,t\geq t_0.
\end{align}

% ---------------------- Theorem ---------------------- Theorem ---------------------- Theorem ---------------------- Theorem ----------------------
\begin{theorem}\label{Theorem03}
Given Assumptions~\ref{Assump:Phi} and ~\ref{Assump:beta} hold, for any connected undirected network running the event-triggered robust dynamic average consensus algorithm in \eqref{RDATsys4a} and the adaptive law \eqref{RDATsys4b}, the dynamic triggering law \eqref{ETcond} guarantees the exclusion of Zeno behavior.
\end{theorem}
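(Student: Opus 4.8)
The plan is to argue by contradiction. Suppose Zeno behavior occurs; then there exist an agent $v_i$ and a finite accumulation time $T_0$ such that $\lim_{k\to\infty} t^i_k = T_0$, so the inter-event intervals $t^i_{k+1}-t^i_k$ shrink to zero as $k\to\infty$. My goal is to exhibit a strictly positive lower bound on these intervals that holds uniformly over $[t_0,T_0)$, which contradicts the accumulation. The engine of the argument is that the triggering function on the left of \eqref{ETcond} can only grow at a bounded rate after a reset, whereas the threshold $\eta_i(t)$ stays strictly positive on bounded time windows by \eqref{EtaBound}.

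First I would track the measurement error $\bm{\varepsilon}_i(t)=x_i(t)-\hat{x}_i(t)$. Immediately after a trigger at $t^i_k$ the broadcast is refreshed, so $\varepsilon_i(t^i_k)=\mathbf{0}_r$. During the inter-event interval $\hat{x}_i$ is held constant, hence $\dot{\varepsilon}_i(t)=\dot{x}_i(t)$, and the uniform bound $\|\dot{x}_i(t)\|_{\infty}\le \dot{x}_{\max}^i$ established just before the theorem yields
$$\|\varepsilon_i(t)\|_1 \le \int_{t^i_k}^{t}\|\dot{x}_i(s)\|_1\,ds \le r\,\dot{x}_{\max}^i\,(t-t^i_k), \qquad t\in[t^i_k,t^i_{k+1}).$$

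Next I would upper bound the triggering function itself. Using $\mathbf{1}^T_r|\varepsilon_i(t)|=\|\varepsilon_i(t)\|_1$ together with H\"older's inequality $-w_i^T(t)\varepsilon_i(t)\le\|w_i(t)\|_{\infty}\|\varepsilon_i(t)\|_1\le w_{\max}^i\|\varepsilon_i(t)\|_1$, I obtain
$$\theta_i\left(\beta_i\mathbf{1}^T_r|\varepsilon_i(t)|-w_i^T(t)\varepsilon_i(t)\right) \le \theta_i\,(\beta_i+w_{\max}^i)\,r\,\dot{x}_{\max}^i\,(t-t^i_k).$$
A trigger at $t^i_{k+1}$ forces the left-hand side to reach $\eta_i(t^i_{k+1})$, so evaluating this inequality at $t=t^i_{k+1}$ and rearranging gives
$$t^i_{k+1}-t^i_k \ge \frac{\eta_i(t^i_{k+1})}{\theta_i\,(\beta_i+w_{\max}^i)\,r\,\dot{x}_{\max}^i}.$$

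Finally I would invoke the strict positivity of the threshold. The lower bound \eqref{EtaBound}, $\eta_i(t)\ge \eta_i(t_0)e^{-(\alpha_i+\delta_i/\theta_i)t}$, gives on the finite window $\eta_i(t^i_{k+1})\ge \eta_i(t_0)e^{-(\alpha_i+\delta_i/\theta_i)T_0}>0$, so every inter-event interval is bounded below by the strictly positive constant
$$\tau_i \triangleq \frac{\eta_i(t_0)\,e^{-(\alpha_i+\delta_i/\theta_i)T_0}}{\theta_i\,(\beta_i+w_{\max}^i)\,r\,\dot{x}_{\max}^i}>0,$$
independent of $k$, contradicting $t^i_{k+1}-t^i_k\to0$. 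I expect the main obstacle to be the second step: one must convert the indefinite-sign triggering function into a quantity growing at most linearly in elapsed time, which relies on the uniform bounds $\dot{x}_{\max}^i$ and $w_{\max}^i$ and on dominating the term $-w_i^T\varepsilon_i$ via H\"older's inequality. The positivity of $\eta_i$ on bounded intervals, supplied by the internal dynamics \eqref{Eqn:eta}, is precisely what keeps $\tau_i$ from collapsing to zero and is the role played by the dynamic (as opposed to static) triggering variable.
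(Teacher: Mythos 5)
Your proposal is correct and follows essentially the same argument as the paper: contradiction, linear growth of $\|\varepsilon_i(t)\|$ after each reset via the uniform bound $\dot{x}^i_{\max}$, domination of the triggering function by $(\beta_i + w^i_{\max})$ times the error norm, and strict positivity of $\eta_i(t)$ from \eqref{EtaBound} to obtain a uniform positive lower bound on inter-event times. The only cosmetic difference is bookkeeping (you work with $\|\cdot\|_1$ and H\"older's inequality where the paper passes through $\|\cdot\|_2$ with a $\sqrt{r}$ factor, and you bound every inter-event interval directly rather than localizing near $T_0$ via $s(\epsilon_0)$), yielding the same final constant.
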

% ---------------------- Theorem ---------------------- Theorem ---------------------- Theorem ---------------------- Theorem ----------------------

\begin{proof}
Suppose there exists Zeno behavior. Then there exists an agent $v_i$, such that $\displaystyle \lim_{k\rightarrow\infty} \, t^i_k = \sum_{k=0}^{\infty}\,\left( t^i_{k+1} - t^i_k \right)  = T_0$, where $T_0$ is a positive constant. It follows from the existence of the limit that for a constant $\epsilon_0 > 0$, there exists a positive integer $s(\epsilon_0)$ such that
\begin{align}
t^i_k \in \left[ T_0-\epsilon_0, T_0\right], \quad \forall \, k\geq s(\epsilon_0).
\end{align}
Let $\tau$ denote the event-time $t^i_{s(\epsilon_0)+1}$. Note that $\left \| \varepsilon_i \left(\tau^+\right) \right \|^2 = 0$ immediately after the triggering. Just before the triggering, from \eqref{ETcond} we have
\begin{align*}
  \left(  \beta_i\mathbf{1}^T_r | \varepsilon_i(\tau^-) | - \varepsilon_i^T(\tau^-)w_i(\tau^-) \right) &\geq \frac{1}{\theta_i}\eta_i(\tau^-)
\end{align*}
Thus
\begin{align}\label{InEq1}
  \beta_i \| \varepsilon_i(\tau^-) \|_1 - \varepsilon_i^T(\tau^-)w_i(\tau^-)   &\geq \frac{1}{\theta_i}\eta_i(\tau^-)
\end{align}
Note
\begin{align}\label{InEq2}
\begin{split}
  &\beta_i \| \varepsilon_i(\tau^-) \|_1 - \varepsilon_i^T(\tau^-)w_i(\tau^-)   \\
  &\qquad \leq \|\varepsilon_i(\tau^-)\|_2 \|w_i(\tau^-)\|_2  + \beta_i \sqrt{r} \| \varepsilon_i(\tau^-) \|_2\\
  &\qquad \leq  \left( w^i_{\max} + \beta_i \right)\sqrt{r} \| \varepsilon_i(\tau^-) \|_2.
  \end{split}
\end{align}
From \eqref{InEq1} and \eqref{InEq2}, we have
\begin{align}\label{InEq3}
  \| \varepsilon_i(\tau^-) \|_2   &\geq \frac{1}{\theta_i \left( w^i_{\max} + \beta_i \right)\sqrt{r} } \eta_i(\tau^-)
\end{align}
Now from \eqref{EtaBound}, we can conclude that there exists a $t^* \in  \left(t^i_{s(\epsilon_0)},\,\,  t^i_{s(\epsilon_0)+1}\right)$ such that
\begin{align}\label{InEq4}
 \left \| \varepsilon_i \left(t\right) \right \|_2 \geq  \frac{\eta_i(t_0)e ^{-\left( \alpha_i + \frac{\delta_i}{\theta_i}  \right)t}}{\theta_i \left( w^i_{\max} + \beta_i \right)\sqrt{r} }, \,\, \forall \, t \in \left[t^*, t^i_{s(\epsilon_0)+1}\right).
\end{align}
Since $\left \| \varepsilon_i \left( {t^i_{s(\epsilon_0)}}^+ \right) \right \|_2 = 0$ and $\dot{x}_i(t)$ is upper-bounded, we have
\begin{align}\label{InEq5}
\begin{split}
 &\left \| \varepsilon_i \left( t \right) \right \|_2 = \| \hat{x}_i(t) - x_i(t) \|_2 \\
 &\quad \leq \left(t - t^i_{s(\epsilon_0)}\right) \sqrt{r} \dot{x}^i_{\max}, \,\, \forall \, t \in \left[t^*, t^i_{s(\epsilon_0)+1}\right).
\end{split}
\end{align}
Combining \eqref{InEq4} and \eqref{InEq5} yields
\begin{align}
\begin{split}
\left(t - t^i_{s(\epsilon_0)}\right) \sqrt{r} \dot{x}^i_{\max} \geq \frac{\eta_i(t_0)e ^{-\left( \alpha_i + \frac{\delta_i}{\theta_i}  \right)t}}{\theta_i \left( w^i_{\max} + \beta_i \right)\sqrt{r} },\qquad&\\ \qquad \forall \, t \in \left[t^*, t^i_{s(\epsilon_0)+1}\right).&
\end{split}
\end{align}
Thus we have
\begin{align*}
\left(t - t^i_{s(\epsilon_0)}\right)  \geq \frac{\eta_i(t_0)e ^{-\left( \alpha_i + \frac{\delta_i}{\theta_i}  \right)t}}{\theta_i \left( w^i_{\max} + \beta_i \right){r} \dot{x}^i_{\max} },\,\, \forall \, t \in \left[t^*, t^i_{s(\epsilon_0)+1}\right).&
\end{align*}
Clearly, $\left(t^i_{s(\epsilon_0)+1} - t^i_{s(\epsilon_0)}\right) \geq \left(t - t^i_{s(\epsilon_0)}\right) $ and thus
\begin{align*}
\left(t^i_{s(\epsilon_0)+1} - t^i_{s(\epsilon_0)}\right)   &\geq \frac{\eta_i(t_0)}{\theta_i \left( w^i_{\max} + \beta_i \right){r} \dot{x}^i_{\max} } e ^{-\left( \alpha_i + \frac{\delta_i}{\theta_i}  \right)t},\\
& \qquad \qquad \qquad \qquad \forall \, t \in \left[t^*, t^i_{s(\epsilon_0)+1}\right)\\
&\geq \frac{\eta_i(t_0)}{\theta_i \left( w^i_{\max} + \beta_i \right){r} \dot{x}^i_{\max} } e ^{-\left( \alpha_i + \frac{\delta_i}{\theta_i}  \right) T_0}
\end{align*}
where $T_0 = \displaystyle \lim_{k\rightarrow\infty} t^i_k$. Thus the inter-event times are lower-bounded which contradicts the existence of the limit $T_0$. In fact,
$$ \rho = \frac{\eta_i(t_0)}{\theta_i \left( w^i_{\max} + \beta_i \right){r} \dot{x}^i_{\max} } e ^{-\left( \alpha_i + \frac{\delta_i}{\theta_i}  \right) T_0}$$
is the lower-bound on inter-events for $t \geq t^i_{s(\epsilon_0)}$. This clearly contradicts the existence of Zeno behavior.
\end{proof}

\section{Simulation}

Consider an undirected network of $10$ nodes given in Fig.~\ref{graph}. Initially, the network is connected as shown in Fig.~\ref{FigG1} and at $t = 2.5$, link $(v_3,v_7)$ (and thus $(v_7,v_3)$) is severed, resulting in two disconnected graphs for the rest of the simulation time, see Fig.~\ref{FigG2}.
\begin{figure}[!ht]
\centering{
\subfigure[During $0\leq t < 2.5$]{
\begin{tikzpicture}[-,node distance=1cm,
  thick,main node/.style={circle,fill=blue!20,draw,font=\sffamily\tiny\bfseries}]
  \node[main node] (1) {1};
  \node[main node] (2) [below of=1] {2};
  \node[main node] (3) [left of=2] {3};
  \node[main node] (4) [above of=3] {4};
  \node[main node] (5) [above  of=4] {5};
  \node[main node] (7) [below left of=3] {7};
  \node[main node] (6) [ left of=4] {6};
  \node[main node] (9) [below left of=6] {9};
  \node[main node] (8) [below of=9] {8};
 \node[main node] (10) [right of=9] {\scalebox{.7}{10}};
\draw (1) -- (2)
(2) -- (4)
(3) -- (4)
(4) -- (5)
(4) -- (6)
(3) -- (7)
%(3) -- (8)
(7) -- (8)
(6) -- (5)
(8) -- (9)
(3) -- (2)
(9) -- (10);
\end{tikzpicture}\label{FigG1}}$\qquad\qquad$
\subfigure[During $2.5 \leq t $]{
\begin{tikzpicture}[-,node distance=1cm,
  thick,main node/.style={circle,fill=blue!20,draw,font=\sffamily\tiny\bfseries}]
  \node[main node] (1) {1};
  \node[main node] (2) [below of=1] {2};
  \node[main node] (3) [left of=2] {3};
  \node[main node] (4) [above of=3] {4};
  \node[main node] (5) [above  of=4] {5};
  \node[main node] (7) [below left of=3] {7};
  \node[main node] (6) [ left of=4] {6};
  \node[main node] (9) [below left of=6] {9};
  \node[main node] (8) [below of=9] {8};
 \node[main node] (10) [right of=9] {\scalebox{.7}{10}};
\draw (1) -- (2)
(2) -- (4)
(3) -- (4)
(4) -- (5)
(4) -- (6)
%(3) -- (7)
%(3) -- (8)
(7) -- (8)
(6) -- (5)
(8) -- (9)
(3) -- (2)
(9) -- (10);
\end{tikzpicture}\label{FigG2}}
}
\caption{Network.}
\label{graph}
\end{figure}
The individual reference signals are given as
\begin{align*}
  \begin{array}{ll}
    \phi_i(t) = a_i\sin(\omega_i t + \psi_i)),   \quad \forall\,i\in\{1,\ldots,5\}, & \\
    \phi_i(t) = a_i\cos(\omega_i t + \psi_i)),,   \quad \forall\,i\in\{6,\ldots,10\}, & \\
    \quad a_i = \frac{i-1}{2}-7, \quad \omega_i = \frac{1}{4}(i+1),  \quad  \psi_i(t) = \frac{2\pi i}{n} -\pi. &
  \end{array}
\end{align*}
For simulation purposes, we select $\varphi = 10$, $\dot{\varphi} = 20$ and $\gamma = 1$. The individual design parameters are selected as $\alpha_i = 3$, $\beta_i = 100$, $\delta_i = 1.5$, and $\theta_i = 0.9$ for all $i$. Here we use random initial conditions for $z_i(0)$, $\kappa_j(0)$ and $\eta_i(0)$ with the constraints $\kappa_j(0) \geq 10$ and $\eta_i(0) \geq 1$.

In our simulation, the sample length is $10^{-3}$. Under the proposed dynamic event-triggered communication mechanism, during time interval $[0,5]$, agents $1-10$ triggered $26\%$, $25\%$, $31\%$, $44\%$, $25\%$, $29\%$, $31\%$, $38\%$, $33\%$, and $40\%$ of times. Therefore, our dynamic event-triggered communication mechanism is very efficient and avoids inter-agent communication about $55-75\%$ of the time.

%%%%%%%%%%%%%%%%%%%%%%%%%%%%%%%%%%%%%%%%%%%%%%%%%%%%%%%%%%%%%%%%%%%%%%%%%%%%
\begin{figure}[!ht]
  \begin{centering}
      \subfigure[Individual $\phi_i(t)$ (solid lines) and corresponding $\bar{\phi}(t)$ (thick dotted lines).]{
      \psfrag{Time}[][]{\footnotesize{$t$}}
      \psfrag{phi}[][]{\footnotesize{$\phi(t)$}}
      \includegraphics[width=.23\textwidth]{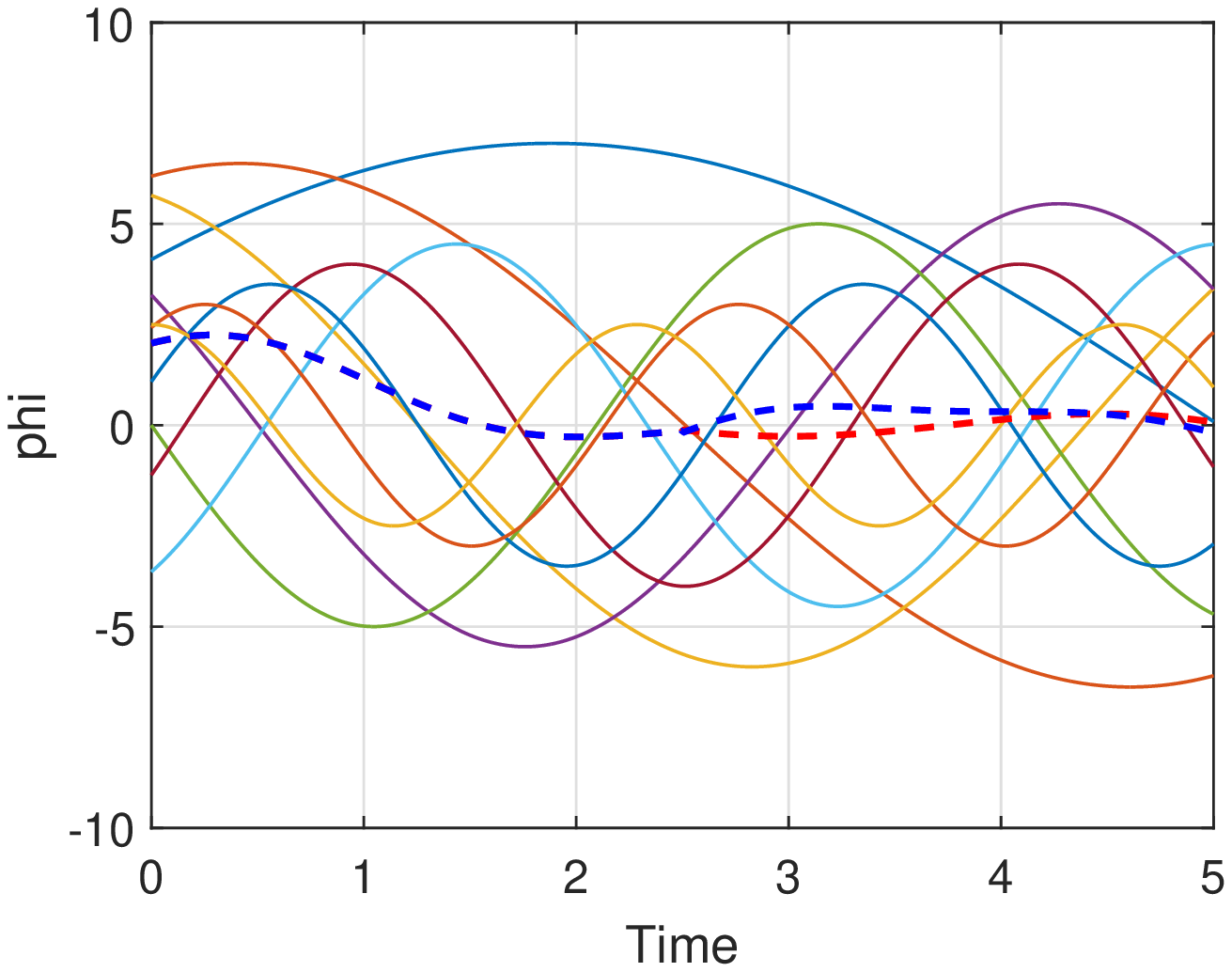}\label{Fig:Phi}}
      \subfigure[Network estimates of $\bar{\phi}(t)$.]{
      \psfrag{Time}[][]{\footnotesize{$t$}}
      \psfrag{p}[][]{\footnotesize{$x(t)$}}
      \psfrag{Avg1}[][]{\tiny{$\bar{\phi}_1$}}
      \psfrag{Avg2}[][]{\tiny{$\bar{\phi}_2$}}
      \psfrag{x1}[][]{\tiny{$x_1$}}
      \psfrag{x2}[][]{\tiny{$x_2$}}
      \psfrag{x3}[][]{\tiny{$x_3$}}
      \psfrag{x4}[][]{\tiny{$x_4$}}
      \psfrag{x5}[][]{\tiny{$x_5$}}
      \psfrag{x6}[][]{\tiny{$x_6$}}
      \psfrag{x7}[][]{\tiny{$x_7$}}
      \psfrag{x8}[][]{\tiny{$x_8$}}
      \psfrag{x9}[][]{\tiny{$x_9$}}
      \psfrag{x10}[][]{\tiny{$x_{10}$}}
      \includegraphics[width=.23\textwidth]{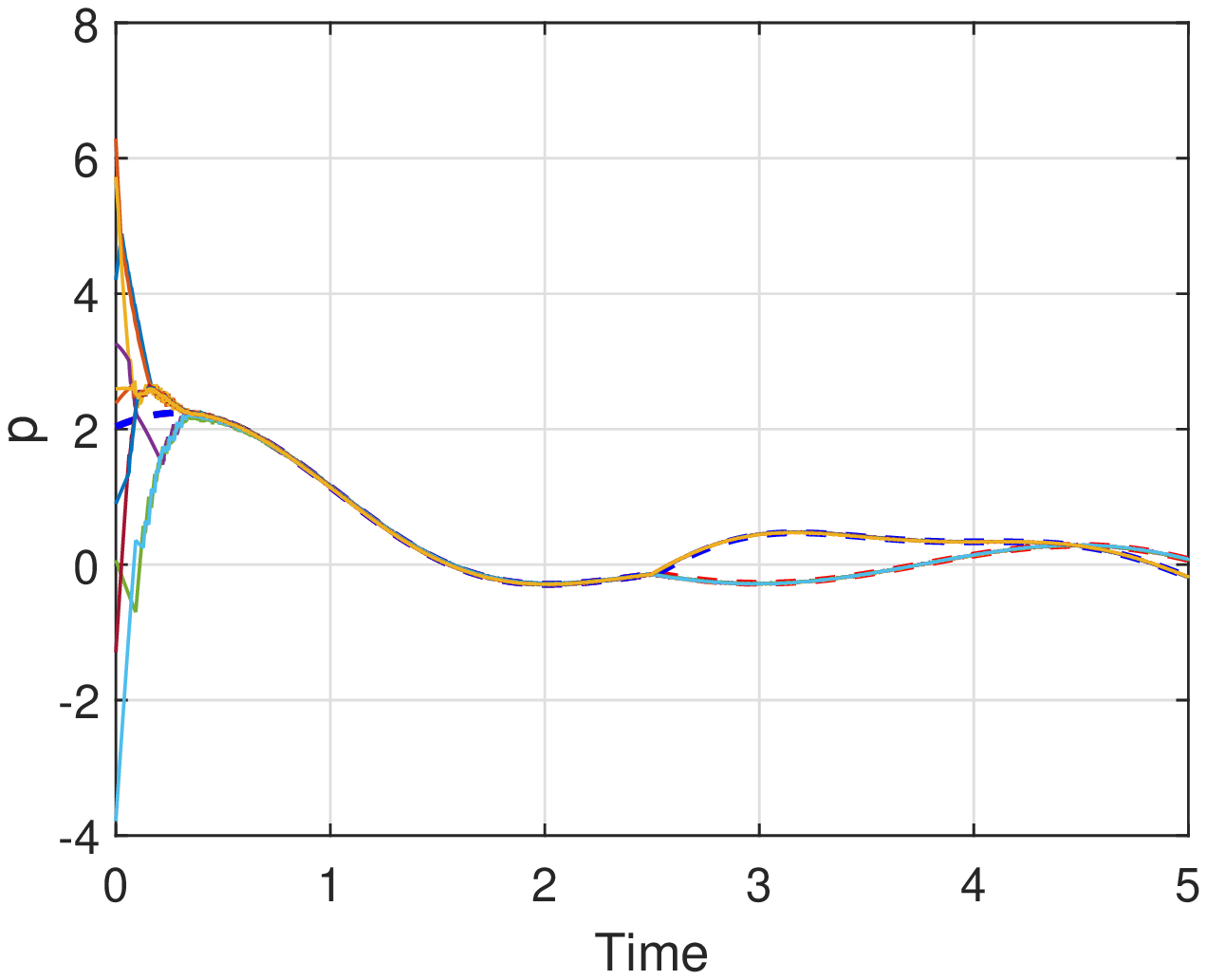}\label{Fig:X}}
      \caption{Individual reference signals and network estimates of the time-varying average.}
      \label{Plot1}
  \end{centering}
\end{figure}
%%%%%%%%%%%%%%%%%%%%%%%%%%%%%%%%%%%%%%%%%%%%%%%%%%%%%%%%%%%%%%%%%%%%%%%%%%%%

Individual reference signals for each agent and the corresponding average $\bar{\phi}(t)$ are given in Fig. \ref{Fig:Phi}. Note that after the network splits, there are two $\bar{\phi}(t)$s, one corresponding to each of the connected components. The results obtained from implementing the proposed event-triggered dynamic average consensus estimators are given in Fig. \ref{Fig:X}. Note that the proposed event-triggered algorithm is able to achieve accurate estimates of $\bar{\phi}(t)$ even with network interruptions. The estimation error for the proposed dynamic average consensus estimator are given in Figs.~\ref{Fig:Error1} and \ref{Fig:Error2}. Figure \ref{Fig:Error1} contains the estimation error for agents $1-6$ while Fig. \ref{Fig:Error2} contains the estimation error for agents $7-10$.

%%%%%%%%%%%%%%%%%%%%%%%%%%%%%%%%%%%%%%%%%%%%%%%%%%%%%%%%%%%%%%%%%%%%%%%%%%%%%%
%\begin{figure}[!ht]
%  \begin{centering}
%      \psfrag{Time}[][]{\footnotesize{$t$}}
%      \psfrag{p}[][]{\footnotesize{$x(t)$}}
%      \psfrag{Avg1}[][]{\tiny{$\bar{\phi}_1$}}
%      \psfrag{Avg2}[][]{\tiny{$\bar{\phi}_2$}}
%      \psfrag{x1}[][]{\tiny{$x_1$}}
%      \psfrag{x2}[][]{\tiny{$x_2$}}
%      \psfrag{x3}[][]{\tiny{$x_3$}}
%      \psfrag{x4}[][]{\tiny{$x_4$}}
%      \psfrag{x5}[][]{\tiny{$x_5$}}
%      \psfrag{x6}[][]{\tiny{$x_6$}}
%      \psfrag{x7}[][]{\tiny{$x_7$}}
%      \psfrag{x8}[][]{\tiny{$x_8$}}
%      \psfrag{x9}[][]{\tiny{$x_9$}}
%      \psfrag{x10}[][]{\tiny{$x_{10}$}}
%      \includegraphics[width=.35\textwidth]{plots/X2a}
%      \caption{Network estimates of $\bar{\phi}(t)$.}
%      \label{Fig:X}
%  \end{centering}
%\end{figure}
%%%%%%%%%%%%%%%%%%%%%%%%%%%%%%%%%%%%%%%%%%%%%%%%%%%%%%%%%%%%%%%%%%%%%%%%%%%%%%

%%%%%%%%%%%%%%%%%%%%%%%%%%%%%%%%%%%%%%%%%%%%%%%%%%%%%%%%%%%%%%%%%%%%%%%%%%%%
\begin{figure}[!ht]
  \begin{centering}
      \subfigure[Consensus error for agents $1-6$.]{
      \psfrag{Time}[][]{\footnotesize{$t$}}
      \psfrag{ConsensusError}[][]{\footnotesize{$\tilde{x}(t)$}}
      \includegraphics[width=.23\textwidth]{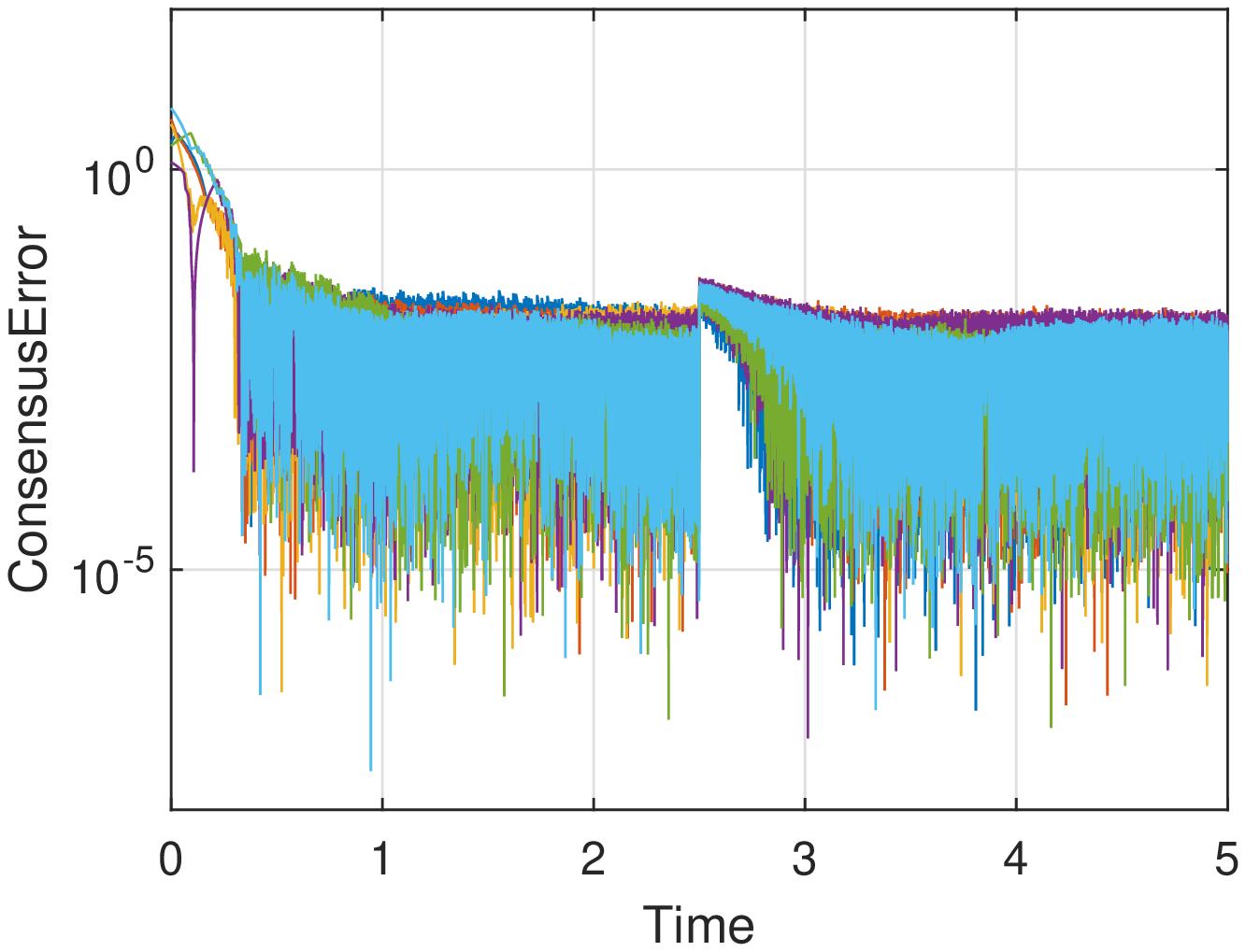}\label{Fig:Error1}}
      \subfigure[Consensus error for agents $7-10$.]{
      \psfrag{Time}[][]{\footnotesize{$t$}}
      \psfrag{ConsensusError}[][]{\footnotesize{$\tilde{x}(t)$}}
      \includegraphics[width=.23\textwidth]{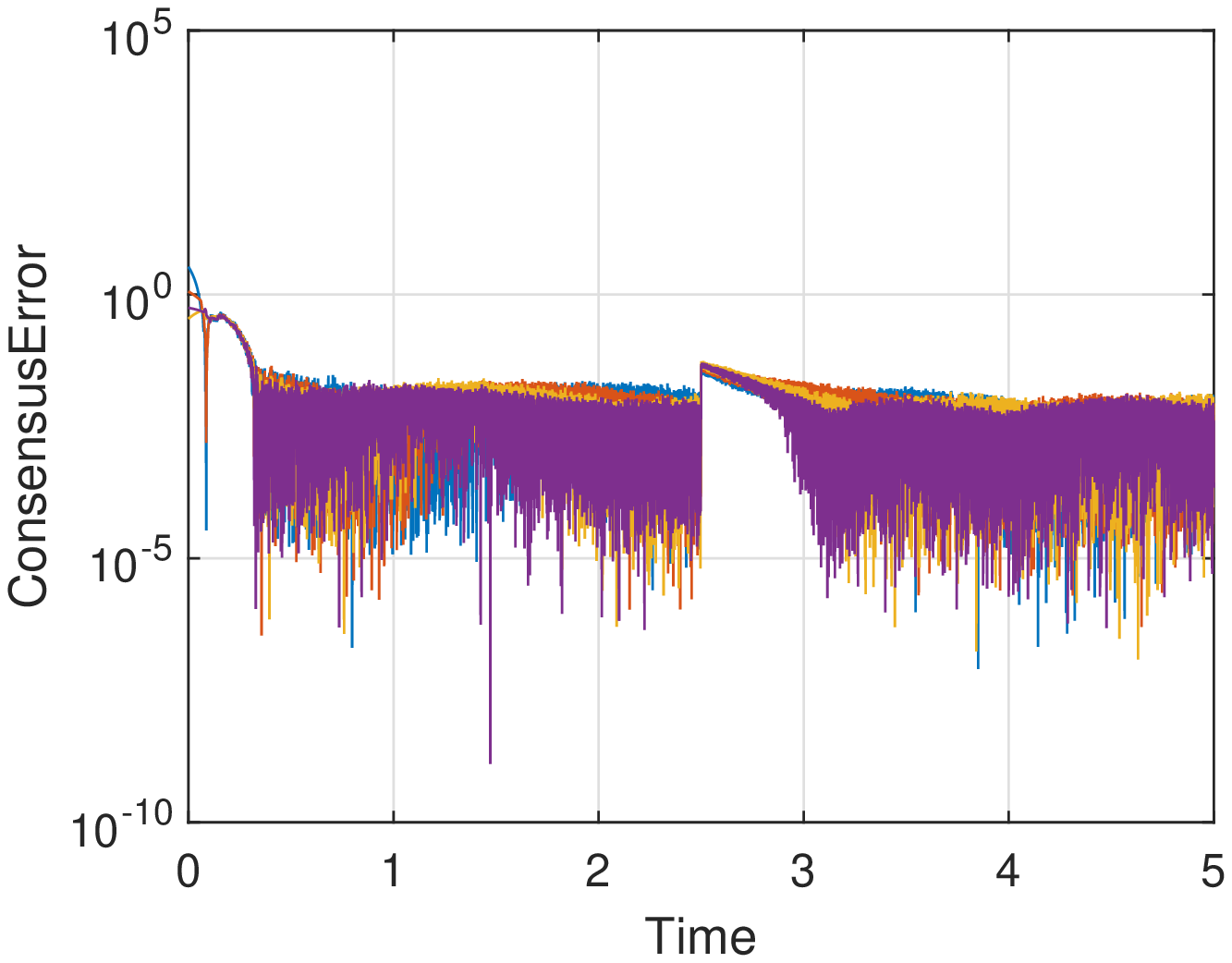}\label{Fig:Error2}}
      \caption{Dynamic average consensus error $\tilde{x}(t)$.}
      \label{Plot1}
  \end{centering}
\end{figure}
%%%%%%%%%%%%%%%%%%%%%%%%%%%%%%%%%%%%%%%%%%%%%%%%%%%%%%%%%%%%%%%%%%%%%%%%%%%%

\section{Conclusion}\label{sec:conclusion}

Here we present a fully distributed dynamic average consensus algorithm that is robust to initialization errors. The proposed robust algorithm makes use of an adaptive gain which removes the explicit use of any upper bounds on reference signals or its time-derivatives in the algorithm. Since this robust algorithm requires continuous communication among agents, we introduce a novel dynamic event-triggered communication scheme to reduce the overall inter-agent interactions. The proposed triggering laws involve internal dynamic variables which play an essential role in guaranteeing that the triggering time sequence does not exhibit Zeno behavior. Asymptotic convergence of both the continuous communication based algorithm and the event-triggered algorithm are presented. Future work include applying the developed algorithm to distributed learning and control problems as well as extending the current approach by considering directed networks.

\bibliography{Biblio}

% Generated by IEEEtran.bst, version: 1.14 (2015/08/26)
\begin{thebibliography}{10}
\providecommand{\url}[1]{#1}
\csname url@samestyle\endcsname
\providecommand{\newblock}{\relax}
\providecommand{\bibinfo}[2]{#2}
\providecommand{\BIBentrySTDinterwordspacing}{\spaceskip=0pt\relax}
\providecommand{\BIBentryALTinterwordstretchfactor}{4}
\providecommand{\BIBentryALTinterwordspacing}{\spaceskip=\fontdimen2\font plus
\BIBentryALTinterwordstretchfactor\fontdimen3\font minus
  \fontdimen4\font\relax}
\providecommand{\BIBforeignlanguage}[2]{{%
\expandafter\ifx\csname l@#1\endcsname\relax
\typeout{** WARNING: IEEEtran.bst: No hyphenation pattern has been}%
\typeout{** loaded for the language `#1'. Using the pattern for}%
\typeout{** the default language instead.}%
\else
\language=\csname l@#1\endcsname
\fi
#2}}
\providecommand{\BIBdecl}{\relax}
\BIBdecl

\bibitem{George}
J.~George, ``Networked sensing and distributed \textsc{K}alman-\textsc{B}ucy
  filtering based on dynamic average consensus,'' in \emph{Proc. IEEE
  International Conference on Distributed Computing in Sensor Systems}, May
  2013, pp. 175 -- 182.

\bibitem{George2018ICASSP}
------, ``Distributed maximum likelihood using dynamic average consensus,'' in
  \emph{Proc. IEEE International Conference on Acoustics, Speech and Signal
  Processing (ICASSP)}, Apr. 2018, pp. 3834--3838.

\bibitem{Spanos2005}
D.~P. Spanos, R.~Olfati-Saber, and R.~M. Murray, ``Dynamic consensus on mobile
  networks,'' in \emph{Proc. 16th IFAC World Congress}, Prague, Czech, 2005.

\bibitem{Freeman:CDC2006}
R.~A. Freeman, P.~Yang, and K.~M. Lynch, ``Stability and convergence properties
  of dynamic average consensus estimators,'' in \emph{Proc. 45th IEEE
  Conference on Decision and Control}, Dec. 2006, pp. 338 -- 343.

\bibitem{Bai10}
H.~Bai, R.~Freeman, and K.~Lynch, ``Robust dynamic average consensus of
  time-varying inputs,'' in \emph{Proc. 49th IEEE Conference on Decision and
  Control}, Dec. 2010, pp. 3104 -- 3109.

\bibitem{Bai2016}
H.~Bai, ``A two-time-scale dynamic average consensus estimator,'' in
  \emph{Proc. 55th Conference on Decision and Control}, Dec. 2016, pp. 75 --
  80.

\bibitem{Kia2013}
S.~S. Kia, J.~Cort\'{e}s, and S.~Mart\'{\i}nez, ``Singularly perturbed
  algorithms for dynamic average consensus,'' in \emph{Proc. European Control
  Conference}, Jul. 2013, pp. 1758 -- 1763.

\bibitem{Nosrati2012}
S.~Nosrati, M.~Shafiee, and M.~B. Menhaj, ``Dynamic average consensus via
  nonlinear protocols,'' \emph{Automatica}, vol.~48, no.~9, pp. 2262 -- 2270,
  2012.

\bibitem{Kia2014}
S.~S. Kia, J.~Cort\'{e}s, and S.~Mart\'{\i}nez, ``Dynamic average consensus
  with distributed event-triggered communication,'' in \emph{Proc. 53rd IEEE
  Conference on Decision and Control}, Dec 2014, pp. 890--895.

\bibitem{Kia2015a}
------, ``Dynamic average consensus under limited control authority and privacy
  requirements,'' \emph{International Journal of Robust and Nonlinear Control},
  vol.~25, no.~13, pp. 1941 -- 1966, 2015.

\bibitem{Zhu2010}
M.~Zhu and S.~Mart\'{\i}nez, ``Discrete-time dynamic average consensus,''
  \emph{Automatica}, vol.~46, no.~2, pp. 322 -- 329, 2010.

\bibitem{Yuan2012}
Y.~Yuan, J.~Liu, R.~M. Murray, and J.~Gon\c{c}alves, ``Decentralised
  minimal-time dynamic consensus,'' in \emph{Proc. American Control
  Conference}, Jun. 2012, pp. 800 -- 805.

\bibitem{Montijano2014b}
E.~Montijano, J.~I. Montijano, C.~Sag\"{u}\'{e}s, and S.~Mart\'{\i}nez, ``Step
  size analysis in discrete-time dynamic average consensus,'' in \emph{Proc.
  American Control Conference}, Jun. 2014, pp. 5127 -- 5132.

\bibitem{Scoy2015}
B.~V. Scoy, R.~A. Freeman, and K.~M. Lynch, ``A fast robust nonlinear dynamic
  average consensus estimator in discrete time,'' in \emph{Proc. 5th
  \textsc{IFAC} Workshop on Distributed Estimation and Control in Networked
  Systems}, Sep. 2015, pp. 191 -- 196.

\bibitem{VanScoy2015CDC}
------, ``Design of robust dynamic average consensus estimators,'' in
  \emph{Proc. 54th IEEE Conference on Decision and Control}, Dec. 2015, pp.
  6269 -- 6275.

\bibitem{Franceschelli2016}
M.~Franceschelli and A.~Gasparri, ``Multi-stage discrete time dynamic average
  consensus,'' in \emph{Proc. 55th Conference on Decision and Control}, Dec.
  2016, pp. 897 -- 903.

\bibitem{WeiRen2011ACC}
F.~Chen, Y.~Cao, and W.~Ren, ``Distributed computation of the average of
  multiple time-varying reference signals,'' in \emph{Proc. American Control
  Conference}, Jun. 2011, pp. 1650 -- 1655.

\bibitem{WeiRen2012TAC}
------, ``Distributed average tracking of multiple time-varying reference
  signals with bounded derivatives,'' \emph{IEEE Transactions on Automatic
  Control}, vol.~57, no.~12, pp. 3169 -- 3174, Dec. 2012.

\bibitem{WeiRen2013CCC}
F.~Chen and W.~Ren, ``Robust distributed average tracking for coupled general
  linear systems,'' in \emph{Proc. 32nd Chinese Control Conference}, Jul. 2013,
  pp. 6953 -- 6958.

\bibitem{WeiRen2014ACC}
F.~Chen, G.~Feng, L.~Liu, and W.~Ren, ``An extended proportional-integral
  control algorithm for distributed average tracking and its applications in
  \textsc{E}uler-\textsc{L}agrange systems,'' in \emph{Proc. American Control
  Conference}, Jun. 2014, pp. 2581 -- 2586.

\bibitem{WeiRen2015ACC}
S.~Ghapani, W.~Ren, and F.~Chen, ``Distributed average tracking for
  double-integrator agents without using velocity measurements,'' in
  \emph{Proc. American Control Conference}, Jul. 2015, pp. 1445 -- 1450.

\bibitem{WeiRen2015TAC2}
F.~Chen, G.~Feng, L.~Liu, and W.~Ren, ``Distributed average tracking of
  networked \textsc{E}uler-\textsc{L}agrange systems,'' \emph{IEEE Transactions
  on Automatic Control}, vol.~60, no.~2, pp. 547 -- 552, Feb. 2015.

\bibitem{WeiRen2015TAC3}
F.~Chen, W.~Ren, W.~Lan, and G.~Chen, ``Distributed average tracking for
  reference signals with bounded accelerations,'' \emph{IEEE Transactions on
  Automatic Control}, vol.~60, no.~3, pp. 863 -- 869, Mar. 2015.

\bibitem{WeiRen2016ACC}
S.~Ghapani, S.~Rahili, and W.~Ren, ``Distributed average tracking for
  second-order agents with nonlinear dynamics,'' in \emph{Proc. American
  Control Conference}, Jul. 2016, pp. 4636 -- 4641.

\bibitem{Zhao2017}
Y.~Zhao, Y.~Liu, Z.~Li, and Z.~Duan, ``Distributed average tracking for
  multiple signals generated by linear dynamical systems: An edge-based
  framework,'' \emph{Automatica}, vol.~75, pp. 158 -- 166, 2017.

\bibitem{WeiRen2017ACC}
S.~Rahili and W.~Ren, ``Heterogeneous distributed average tracking using
  nonsmooth algorithms,'' in \emph{Proc. American Control Conference}, May.
  2017, pp. 691 -- 696.

\bibitem{Kia2015b}
S.~S. Kia, J.~Cort\'{e}s, and S.~Mart\'{\i}nez, ``Distributed event-triggered
  communication for dynamic average consensus in networked systems,''
  \emph{Automatica}, vol.~59, pp. 112 -- 119, 2015.

\bibitem{George2017ACC}
J.~George, R.~A. Freeman, and K.~M. Lynch, ``Robust dynamic average consensus
  algorithm for signals with bounded derivatives,'' in \emph{Proc. American
  Control Conference (ACC)}, May 2017, pp. 352--357.

\bibitem{Johansson99}
K.~H. Johansson, M.~Egerstedt, J.~Lygeros, and S.~Sastry, ``On the
  regularization of zeno hybrid automata,'' \emph{Systems \& Control Letters},
  vol.~38, no.~3, pp. 141 -- 150, 1999.

\bibitem{Gutman04}
W.~X. I.~Gutman, ``Generalized inverse of the \textsc{L}aplacian matrix and
  some applications,'' \emph{Bulletin, Classe des Sciences Math\'{e}matiques et
  Naturelles, Sciences math\'{e}matiques}, vol. 129, no.~29, pp. 15--23, 2004.

\bibitem{ioannou2013robust}
P.~Ioannou and J.~Sun, \emph{Robust Adaptive Control}.\hskip 1em plus 0.5em
  minus 0.4em\relax Dover Publications, 2013.

\bibitem{Filippov}
A.~Filippov, \emph{Differential Equations with Discontinuous Righthand Sides:
  Control Systems}, ser. Mathematics and its Applications.\hskip 1em plus 0.5em
  minus 0.4em\relax Springer Netherlands, 1988.

\bibitem{Dimarogonas12}
D.~V. Dimarogonas, E.~Frazzoli, and K.~H. Johansson, ``Distributed
  event-triggered control for multi-agent systems,'' \emph{IEEE Transactions on
  Automatic Control}, vol.~57, no.~5, pp. 1291--1297, May 2012.

\bibitem{Girard15}
A.~Girard, ``Dynamic triggering mechanisms for event-triggered control,''
  \emph{IEEE Transactions on Automatic Control}, vol.~60, no.~7, pp. 1992 --
  1997, Jul. 2015.

\bibitem{Xinlei}
X.~Yi, ``Resource-constrained multi-agent control systems: Dynamic
  event-triggering, input saturation, and connectivity preservation,''
  Licentiate Thesis, Royal Institute of Technology, Sweden, 2017.

\end{thebibliography}
\bibliographystyle{IEEEtran}

\end{document}